\newtheorem{thm}{Theorem}[section]
\newtheorem*{main}{Main Theorem}
\newtheorem{prop}[thm]{Proposition}
\newtheorem{lem}[thm]{Lemma}
\newtheorem{cor}[thm]{Corollary}
\theoremstyle{definition}
\newtheorem{df}[thm]{Definition}
\theoremstyle{remark}
\newcommand{\cO}{\mathcal O}
\newcommand{\Hom}{\operatorname{\rm Hom}}
\newcommand{\fg}{\mathfrak g}
\newcommand{\fh}{\mathfrak h}
\newcommand{\fb}{\mathfrak b}
\newcommand{\dD}{\widehat \Delta}
\newcommand{\de}{\widehat \varepsilon}
\newcommand{\dS}{\widehat S}
\newcommand{\frsl}{\mathfrak{sl}}
\newcommand{\ch}{\operatorname{\mathrm ch}}
\begin{document}
	\title[Comparison of unitary duals]{Comparison of unitary duals of Drinfeld doubles and complex semisimple Lie groups}
	\author{Yuki Arano}
	\address{Department of Mathematical Sciences, the University of Tokyo, Komaba, Tokyo, 153-8914, Japan}
	\email{arano@ms.u-tokyo.ac.jp}
	\date{}
	\begin{abstract}
	We determine a substantial part of the unitary representation theory of the Drinfeld double of a $q$-deformation of a compact Lie group in terms of the complexification of the compact Lie group. Using this, we show that the dual of every $q$-deformation of a higher rank compact Lie group has central property ${\rm (T)}$. We also determine the unitary dual of $SL_q(n,\mathbb{C})$.
	\end{abstract}
	\maketitle
	\section{Introduction}
	In our previous paper \cite{Arano}, we have studied the unitary representation theory of quantum doubles of $q$-deformations of compact Lie groups and proved property ${\rm (T)}$ for the double of $SU_q(2n+1)$ (which is equivalent to central property ${\rm (T)}$ for the dual of $SU_q(2n+1)$).
	Regarding such doubles as such doubles a $q$-deformation of the complexification of the Lie group served as the main idea of our previous work.

	In \cite{JL}, Joseph and Letzter defined a notion of quantum Harish-Chandra module, which also can be seen as a certain representation of a $q$-deformation of a complex semisimple Lie group.
	In this paper, we compare these two notions and show that the quantum Harish-Chandra modules are nothing but the admissible representations of quantum doubles, which already has been implicitly prospected in \cite{VY}. Then we use deep analysis on quantum Harish-Chandra modules to compare the representation theory of the quantum doubles and that of the classical case.
	Our main theorem is as follows. Recall that in \cite{Arano}, we have shown that every irreducible unitary representation is admissible.
	\begin{main}
	Let $K$ be a connected simply connected compact Lie group and fix $0<q<1$.
	Consider the $q$-deformation $K_q$. Let $Q$ (resp.\ $P$) be the root lattice (resp.\ the weight lattice) and $W$ the Weyl group.
	\begin{enumerate}
	\item The irreducible admissible representations of the quantum double $G_q$ of $K_q$ are parametrized by $(P \times X)/W$, $X = \fh^*/ 2 \pi i \log(q)^{-1} Q^\vee$
 and $W$ acts on $P \times X$ by the diagonal action.
	\item For $(\lambda,\nu) \in P \times \fh^*$ such that ${\rm Im}(\nu)$ is small enough, the corresponding irreducible admissible representation of $G_q$ is unitary if and only if the corresponding irreducible representation of the complexification $G$ of $K$ is unitary.
	\end{enumerate}
	\end{main}
	This result allows us to: 
	\begin{itemize}
	\item classify a substantial amount of unitary representations of such doubles in terms of those of complex semisimple Lie groups,
	\item prove central property ${\rm (T)}$ for the duals of general $q$-deformations with rank equal or larger than $2$ and
	\item classify all unitary representations of the quantum double of $SU_q(n)$.
	\end{itemize}
	In particular, this approach generalizes our previous result \cite{Arano} and also the one by Jones \cite{CJones} in the quantum $G_2$-case.

	This work is motivated by the theory of operator algebras as follows. The study of central multipliers on compact quantum groups has been started by De Commer, Freslon and Yamashita \cite{DCFY}. It was already implicitly present in the study of approximation properties by Brannan \cite{Brannan} and Freslon \cite{Freslon}. In \cite{DCFY}, it is also shown that the central multipliers are the same as the multipliers of quantum doubles and therefore have a strong relationship with the unitary representation theory of quantum doubles, which was already studied by Pusz \cite{Pusz} and Voigt \cite{Voigt} in the case of $SU_q(2)$.

	There is also a strong relationship with the theory of subfactors. Popa and Vaes \cite{PV}  introduced a notion of multipliers for tensor categories, which appeared to be the same as the corresponding central multipliers in the quantum group case and also the multipliers for standard invariants in the case of subfactors in \cite{Popa}. Neshveyev-Yamashita \cite{NY} and Ghosh-Jones \cite{GJ} also introduced equivalent notions in different approaches. Together with central property ${\rm (T)}$ of quantum groups, this notion eventually lead us to the a first example of non group-like subfactors with property ${\rm (T)}$ standard invariant.
	
	{\bf Acknowledgment} The author wishes to express his gratitude to Hironori Oya and Kohei Yahiro for many fruitful discussions.
	He is grateful to Tim de Laat and Jonas Wahl for many valuable comments on the draft version of this manuscript.
	He wishes to thank KU Leuven, where the paper was written, for the invitation and hospitality.
	He also appreciates the supervision of Yasuyuki Kawahigashi.
	This work was supported by the Research Fellowship of the Japan Society for the Promotion of Science and the Program for Leading Graduate Schools, MEXT, Japan.
\section{Preliminaries}
	We repeat the definitions in \cite{Arano}, except for the conventions that we write $G$ for the complexification and $K$ for the maximal compact subgroup.

	Let $K$ be a connected simply connected compact Lie group.
	Take its complexification $G = K_\mathbb{C}$ with its Iwasawa decomposition $G=KAN$. Take the Lie algebra $\fg$ of $G$ and a Cartan algebra $\fh$. Let $(\cdot,\cdot)$ be the natural bilinear form on $\fh$, which is normalized as $(\alpha,\alpha)=2$ for a short root $\alpha$. For each $\alpha \in \Delta$, let $\alpha^\vee := 2\alpha/(\alpha,\alpha)$ be the coroot.
	Take the set of roots $\Delta \subset \fh^*$, the (co)root lattice $Q$ ($Q^\vee)$ and the (co)weight lattice $P$ ($P^\vee$). Fix a set of simple roots $\Pi \subset \Delta$ and let $Q_+$, $Q^\vee_+$, $P_+$ and $P^\vee_+$ be the positive part of the corresponding lattices. 
	Put $q_\alpha := q^{(\alpha,\alpha)/2}$,
	\[n_q := \frac{q^n - q^{-n}}{q - q^{-1}},\]
	\[n_q ! := n_q (n-1)_q \dots 1_q,\]
	\[\left( \begin{matrix} n \\ m \end{matrix} \right)_q := \frac{n_q!}{m_q! (n-m)_q!}.\]
	\begin{df}
	The \emph{quantized enveloping algebra} $U_q(\fg)$ is the Hopf $*$-algebra generated by $\{K_\lambda, E_\alpha, F_\alpha \mid \lambda \in P, \alpha \in \Pi\}$ with the relations
	\[K_0 = 1, \quad K_\lambda K_\mu = K_{\lambda + \mu},\]
	\[K_\lambda E_\alpha K_{-\lambda} = q^{(\alpha,\lambda)} E_\alpha,\quad K_\lambda F_\alpha K_{-\lambda} = q^{-(\alpha,\lambda)} F_\alpha,\]
	\[[E_\alpha,F_\beta] = \delta_{\alpha,\beta} \frac{K_\alpha - K_{-\alpha}}{q_\alpha - q_\alpha^{-1}},\]
	\[\sum_{r=0}^{1-(\alpha,\beta^\vee)} (-1)^r \left( \begin{matrix} 1-(\alpha,\beta^\vee) \\ r \end{matrix} \right)_{q_\beta} E_\beta^r E_\alpha E_\beta^{1-(\alpha,\beta^\vee)-r} = 0,\]
	\[\sum_{r=0}^{1-(\alpha,\beta^\vee)} (-1)^r \left( \begin{matrix} 1-(\alpha,\beta^\vee) \\ r \end{matrix} \right)_{q_\beta} F_\beta^r F_\alpha F_\beta^{1-(\alpha,\beta^\vee)-r} = 0,\]
		\[K_\lambda^* = K_\lambda,\quad E_\alpha^* = F_\alpha K_\alpha,\quad F_\alpha^* = K_{-\alpha}E_\alpha,\]
	\[\dD(K_\lambda) = K_\lambda \otimes K_\lambda, \quad \de(K_\lambda) = 1, \quad \dS(K_\lambda) = K_{-\lambda},\]
	\[\dD(E_\alpha) = E_\alpha \otimes 1 + K_\alpha \otimes E_\alpha,\quad \de(E_\alpha) = 0,\quad \dS(E_\alpha) = -K_{-\alpha} E_\alpha,\]
	\[\dD(F_\alpha) = F_\alpha \otimes K_{-\alpha} + 1 \otimes F_\alpha,\quad \de(F_\alpha)=0,\quad \dS(F_\alpha) = -F_\alpha K_\alpha.\]
	\end{df}
	Let $U_q(\fh)$ (resp.\ $U_q(\fb^+)$, $U_q(\fb^-)$) be the subalgebra generated by $K_\lambda$ (resp.\ $K_\lambda$ and $E_\alpha$, $K_\lambda$ and $F_\alpha$).
	
	For each $\lambda \in \fh^*$, let $M(\lambda)$ be the Verma module of highest weight $\lambda$ and $V(\lambda)$ its unique irreducible quotient.
	If $\lambda \in P_+$, then $V(\lambda)$ is finite dimensional. We say a $U_q(\fg)$-module is of {\it type 1} if it decomposes into a direct sum of $V(\lambda)$'s for $\lambda \in P_+$. Notice that any subquotient of type 1 module is also of type 1.
	\begin{df}
	The \emph{quantum coordinate algebra} $\cO(K_q) \subset U_q(\fg)^*$ is the subspace of matrix coefficients of type 1 representations. Then $\cO(K_q)$ carries a unique Hopf $*$-algebra structure which makes the pairing $\cO(K_q) \times U_q(\fg) \to {\mathbb C}$ skew, that is,
	\[(ab,c) = (a \otimes b, \dD(c)),\]
	\[(a,cd) = (\Delta(a), d \otimes c),\]
	\[(1,c) = \varepsilon(c),\]
	\[(a,1) = \de(a),\]
	\[(a^*,c) = \overline{(a,\dS(b)^*)}.\]
	for $a,b \in \cO(K_q)$, $c,d \in U_q(\fg)$.
	\end{df}
	We also use the following notation:
	for $a \in \cO(K_q)$ and $b\in U_q(\fg)$
	\begin{align*}
	a \triangleright b & := (a,b_{(2)}) b_{(1)}, & b \triangleleft a & := (a,b_{(1)}) b_{(2)}, \\
	b \triangleright a & := (a_{(1)},b) a_{(2)}, & a \triangleleft b & := (a_{(2)},b) a_{(1)},
	\end{align*}
	where we used the sumless Sweedler notation:
	\[\Delta(x) = x_{(1)} \otimes x_{(2)}.\]

	Since we need to deal with all (possibly) non-real weights in $\fh^*$, we use some terminologies, which are used only in this article.
	\begin{df}
	We say that $\nu \in \fh^*$ is \emph{dominant} (with respect to $q$) if $(\nu,\alpha^\vee) \not \in \mathbb{Z}_{<0} + 2 \pi i \log(q_\alpha)^{-1} \mathbb{Z}$.
	
	We say that $\nu \in \fh^*_\mathbb{R}$ is \emph{small} if $(\nu,\alpha)<1$ for any $\alpha \in \Delta$.

	We say that $\nu \in \fh^*$ is \emph{almost real} (with respect to $q$) if $\frac{\log(q)}{2\pi} {\rm Im}(\nu)$ is small.
%	\[({\rm Im}(\nu), \alpha^\vee) < 2 \pi \log(q_\alpha)^{-1}\]
%	for any $\alpha \in \Delta$.
	\end{df}
	Remark that the set of small (resp.\ almost real) weights is open and invariant under the Weyl group action.
	For the later use, we state several lemmas.
	The following argument was suggested by Hironori Oya.
	\begin{lem}\label{lem case-by-case}
	Let $\mu,\nu \in \fh^*_{\mathbb R}$ be small. Then $\mu - \nu \in Q^\vee$ implies $\mu = \nu$.
	\end{lem}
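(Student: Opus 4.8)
The plan is to convert the smallness assumptions into a size-and-integrality bound on $\xi:=\mu-\nu$ and then kill $\xi$ using the highest root, the only non-formal ingredient being a short inspection of the Dynkin types at the end. First I would use that $\Delta=-\Delta$: applying smallness of $\nu$ to $-\alpha$ gives $(\nu,\alpha)>-1$, and together with $(\mu,\alpha)<1$ this yields $(\xi,\alpha)<2$ for every $\alpha\in\Delta$; applying this in turn to $-\alpha$ gives $|(\xi,\alpha)|<2$ for all roots. On the other hand $\xi\in Q^\vee$ is an integer combination of the simple coroots $\alpha_j^\vee$, and each $(\alpha_j^\vee,\alpha_i)$ is a Cartan integer, so $(\xi,\alpha_i)\in\mathbb Z$; since every root is $W$-conjugate to a simple one, the form is $W$-invariant, and $Q^\vee$ is $W$-stable, we get $(\xi,\alpha)\in\mathbb Z$ for every $\alpha\in\Delta$. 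Hence $(\xi,\alpha)\in\{-1,0,1\}$ for all roots $\alpha$, and it remains to show this forces $\xi=0$.

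Suppose $\xi\neq0$. After replacing $\xi$ by $w\xi$ for a suitable $w\in W$ --- which affects neither membership in $Q^\vee$ nor the constraint $(\xi,\alpha)\in\{-1,0,1\}$, by $W$-invariance --- I may assume $m_i:=(\xi,\alpha_i)\geq0$ for all simple roots $\alpha_i$, with $m_i\geq1$ for at least one $i$ since $\xi\neq0$. If some $m_i\geq2$, that already contradicts the constraint at $\alpha=\alpha_i$. Otherwise every $m_i\in\{0,1\}$; writing the highest root as $\theta=\sum_i a_i\alpha_i$ with marks $a_i\geq1$, we have $(\xi,\theta)=\sum_i a_i m_i$. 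If two or more of the $m_i$ are equal to $1$, or if some $m_i=1$ occurs at a node with $a_i\geq2$, then $(\xi,\theta)\geq2$, contradicting the constraint at $\theta$. The only surviving possibility is that $\xi=\varpi_k$, the fundamental coweight determined by $(\varpi_k,\alpha_j)=\delta_{jk}$, for a node $k$ with mark $a_k=1$.

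So everything reduces to the assertion that a fundamental coweight at a node of mark $1$ never lies in $Q^\vee$ --- equivalently, that such a coweight represents a nonzero class in $P^\vee/Q^\vee$. This is the step I expect to require actual input rather than formal manipulation, and I would verify it case by case over the Dynkin diagrams: there is nothing to check in types $E_8$, $F_4$, $G_2$, where all marks are at least $2$, and in the remaining types one checks directly --- for instance by expanding $\varpi_k$ in the basis of simple coroots --- that $\varpi_k\notin Q^\vee$ for each mark-$1$ node $k$. Granting this, the last case is impossible, so $\xi=0$, i.e.\ $\mu=\nu$.
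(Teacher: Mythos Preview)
Your argument is correct but takes a different route from the paper. Both proofs begin the same way, reducing to the claim that $\xi\in Q^\vee$ with $(\xi,\alpha)\in\{-1,0,1\}$ for all $\alpha\in\Delta$ forces $\xi=0$. You then conjugate $\xi$ into the closed dominant chamber and pair with the highest root $\theta=\sum_i a_i\alpha_i$, which pins $\xi$ down to a fundamental coweight $\varpi_k$ at a mark-$1$ node; you finish by checking type by type that such $\varpi_k$ never lies in $Q^\vee$ (equivalently, that minuscule coweights represent the nontrivial classes of $P^\vee/Q^\vee$). The paper instead avoids all case analysis: rather than the dominant representative, it picks $x\in W\xi\cap Q^\vee_+$ of \emph{minimal height}. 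Some simple root $\alpha$ then has $(x,\alpha)>0$, hence $(x,\alpha)=1$, so $s_\alpha x=x-\alpha^\vee$ has strictly smaller height; minimality forces $x-\alpha^\vee\notin Q^\vee_+$, so the $\alpha^\vee$-coefficient of $x$ vanishes, giving $(x,\alpha)=\sum_{\beta\neq\alpha}n_\beta(\beta^\vee,\alpha)\le 0$, a contradiction. The paper's argument is shorter and classification-free; yours is more structural in that it identifies the obstruction as living in $P^\vee/Q^\vee$, at the cost of invoking the Dynkin classification for the final step.
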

	\begin{proof}
	Since $(\mu-\nu,\alpha) < 2$ for any $\alpha \in \Delta$, it suffices to show the following:

	For any $x \in Q^\vee$, $(x,\alpha) < 2$ for all $\alpha \in \Delta$ implies $x = 0$.

	To show this, after conjugating by the Weyl group action if necessary, we may assume $x \in Q^\vee_+$ and has a minimal height among $Wx \cap Q^\vee_+$. Then, since $x \in Q^\vee_+$, there exists $\alpha \in \Delta_+$ such that $(x,\alpha)>0$. Since $(x,\alpha)<2$, we get $(x,\alpha)=1$. This asserts $s_\alpha(x) = x - \alpha^\vee$.

	Now, since we have assumed that $x$ has a minimal height, we get $x - \alpha^\vee \not \in Q^\vee_+$, which means
	\[x = \sum_{\beta \in \Pi, \beta \neq \alpha} n_\beta \beta^\vee.\]
	In particular, $(x,\alpha) < 0$, which is a contraction.
	\end{proof}
	All the extraordinariness of the type $A$ case in this paper comes from the following easy lemma.
	\begin{lem}\label{lem type A}
	Let $K=SU(n)$.
	Then for any $\nu \in \fh^*_\mathbb{R}$, there exists $\lambda \in P^\vee$ such that $\nu-\lambda$ is small.
	\end{lem}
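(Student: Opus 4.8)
The plan is to use the standard coordinate model of the root system of type $A_{n-1}$. Realize $\fh^*_{\mathbb R}$ as the hyperplane $\{x \in \mathbb{R}^n : \sum_i x_i = 0\}$ with the restriction of the standard inner product, so that $\Delta = \{e_i - e_j : i \neq j\}$ and $(\nu, e_i - e_j) = \nu_i - \nu_j$ for $\nu = (\nu_1,\dots,\nu_n)$. Since every root of $SU(n)$ has squared length $2$, one has $\alpha^\vee = \alpha$, hence $Q^\vee = Q$ and, identifying $\fh$ with $\fh^*$ via the form, $P^\vee = P$. Moreover $P$ is exactly the image of $\mathbb{Z}^n$ under the orthogonal projection $p\colon \mathbb{R}^n \to \fh^*_{\mathbb R}$, $p(x) = x - \tfrac1n\bigl(\sum_i x_i\bigr)(1,\dots,1)$: indeed $p(e_1 + \dots + e_i)$ is the $i$-th fundamental weight, and these generate $P$.

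Given $\nu \in \fh^*_{\mathbb R}$, write it as $(\nu_1,\dots,\nu_n)$ with $\sum_i \nu_i = 0$ and set $\lambda := p\bigl(\lfloor \nu_1\rfloor, \dots, \lfloor \nu_n\rfloor\bigr) \in P = P^\vee$. Then $\nu - \lambda = p\bigl(\{\nu_1\},\dots,\{\nu_n\}\bigr)$, where $\{t\} = t - \lfloor t\rfloor \in [0,1)$ is the fractional part. Because $p$ only subtracts a multiple of $(1,\dots,1)$, which is orthogonal to every root, we get $(\nu - \lambda, e_i - e_j) = \{\nu_i\} - \{\nu_j\}$ for all $i \neq j$, and since each $\{\nu_i\} \in [0,1)$ this is strictly less than $1$. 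As $i \ne j$ range over all ordered pairs, this says precisely that $(\nu - \lambda, \alpha) < 1$ for every $\alpha \in \Delta$, i.e. $\nu - \lambda$ is small.

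There is essentially no obstacle: the entire content is that in type $A$ the coweight lattice, transported to $\fh^*$ via the normalized form, is the projection of the full integer lattice $\mathbb{Z}^n$, so rounding each coordinate of $\nu$ down to the nearest integer moves $\nu$ into $P^\vee$ while keeping every coordinate's fractional part in $[0,1)$; the pairing of such a vector with a root $e_i - e_j$ is then a difference of two numbers in $[0,1)$, hence $< 1$. The only steps needing a line of justification are the simply-laced identification $P^\vee = P$ and the surjectivity of $p|_{\mathbb{Z}^n}\colon \mathbb{Z}^n \to P$, both of which are standard; one could even bypass $P$ altogether by checking directly that $(e_1 + \dots + e_i) - \tfrac{i}{n}(1,\dots,1) \in P^\vee$ for each $i$ and that these span, but the projection-of-$\mathbb{Z}^n$ formulation is the cleanest.
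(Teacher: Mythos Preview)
Your proof is correct and is essentially the same as the paper's: both use the standard coordinate realization of the $A_{n-1}$ root system, identify $P^\vee = P$ (simply laced), and take $\lambda$ by rounding each coordinate of $\nu$ down to the nearest integer so that the fractional parts lie in $[0,1)$ and their pairwise differences are $<1$. The only cosmetic difference is that you work in the hyperplane $\{\sum x_i = 0\}$ via the orthogonal projection $p$, whereas the paper works in the quotient $\mathbb{R}^n/\mathbb{R}(1,\dots,1)$ with weight lattice $\mathbb{Z}^n/\mathbb{Z}(1,\dots,1)$.
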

	\begin{proof}
	We identify $\fh^*_\mathbb{R} \simeq \mathbb{R}^n/\mathbb{R}(1,1,\dots,1)$ with the weight lattice $\mathbb{Z}^n/\mathbb{Z}(1,1,\dots,1)$. Write $\nu = (\nu_1,\nu_2,\dots,\nu_n) \in \fh^*_\mathbb{R}$. Let $\lambda_i$ be the integer such that $0 \leq \nu_i - \lambda_i < 1$. Then $\lambda = (\lambda_i)$ is the desired element in $P = P^\vee$.
	\end{proof}
	For the later use, we consider the subalgebra $J$ of $U_q(\fg)$ generated by $K_{2 \lambda}, E_\alpha, F_\alpha K_\alpha$. Remark that $J$ is the localization of the adjoint finite part $F(U_q(\fg))$ with respect to the Ore set $\{K_{-2 \lambda} \mid \lambda \in P_+\}$.
	Consider the category $\cO$ over $J$. Then the weight makes sense as a element in $\fh^*/\pi i \log(q)^{-1} Q^\vee = \frac{1}{2} X$. For each $\Lambda \in \fh^*$, we put
	\[\cO^\Lambda := \{M \in \cO \mid {\rm wt}(M) \subset \Lambda + P\}.\]
%	For each $V \in \cO$ and $\Lambda \in (\fh^*)^{\perp \Sigma}$,
%	we define a functor
%	\[({\rm ind}_\Sigma^\Pi)_\Lambda: \cO(\Sigma) \to \cO: V \mapsto U_q(\fg) \otimes_{U_q(\fp)} V_\Lambda.\] 

%	Let $V_q(\lambda)$ be the irreducible highest weight module of highest weight $\lambda$.
	The following lemma may be well-known to experts, but we could not find any references.
	\begin{lem}\label{lem char}
	When $q$ is not a root of unity and any $\lambda \in \fh^*$ such that $2\lambda$ is almost real, we have $\ch V_q(\lambda) = \ch V_1(\lambda)$.
	\end{lem}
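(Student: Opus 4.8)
The plan is to reduce the identity $\ch V_q(\lambda)=\ch V_1(\lambda)$ to the statement that the $q$-integral structure attached to $\lambda$ coincides with the classical one, and then to derive that from the hypothesis by a short imaginary-part computation.

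When $q$ is not a root of unity, the structure theory of category $\cO$ over $U_q(\fg)$ runs parallel to the classical one, with the integral root system of $\lambda$ replaced by the $q$-integral root system
\[\Delta^q_{[\lambda]}:=\{\alpha\in\Delta\mid(\lambda+\rho,\alpha^\vee)\in{\mathbb Z}+\daZ\}\]
(equivalently $q_\alpha^{2(\lambda+\rho,\alpha^\vee)}\in q_\alpha^{2{\mathbb Z}}$), its Weyl group $W^q_{[\lambda]}$ acting by the dot-action, and the $q$-Jantzen sum formula
\[\sum_{i\ge1}\ch M^i(\lambda)=\sum_{\alpha\in\Delta_+}\ \sum_{\substack{m\ge1\\ (\lambda+\rho,\alpha^\vee)\in m+\daZ}}\ch M(\lambda-m\alpha)\]
(classically the inner condition reads $(\lambda+\rho,\alpha^\vee)=m$). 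In particular, by the quantum analogue of the Kazhdan--Lusztig theorem for $q$ not a root of unity, $\ch V_q(\lambda)$ is a ${\mathbb Z}$-combination $\sum_{w\in W^q_{[\lambda]}}a_w\,\ch M(w\cdot\lambda)$ whose coefficients depend only on $\Delta^q_{[\lambda]}$, the dot-stabiliser of $\lambda$ and the Bruhat order; and a Verma module has the same formal character $e^\mu\prod_{\alpha\in\Delta_+}(1-e^{-\alpha})^{-1}$ for every $q$. So it suffices to prove that for every $\alpha\in\Delta$,
\[(\lambda+\rho,\alpha^\vee)\in{\mathbb Z}+\daZ\ \Longrightarrow\ (\lambda+\rho,\alpha^\vee)\in{\mathbb Z}.\]
Indeed $\daZ\cap{\mathbb Z}=\{0\}$ (as $\log(q_\alpha)\in{\mathbb R}\setminus\{0\}$), so this implication forces the defining conditions of $\Delta^q_{[\lambda]}$, of the $q$-dot-stabiliser ($(\lambda+\rho,\alpha^\vee)\in\daZ$) and of the pairs $(\alpha,m)$ in the sum formula ($(\lambda+\rho,\alpha^\vee)\in m+\daZ$) all to collapse to their classical counterparts; the same holds at every weight in the linkage class of $\lambda$ (they all have imaginary part ${\rm Im}(\lambda)$), whence $\ch V_q(\lambda)=\ch V_1(\lambda)$, the classical $V_1(\lambda)$ having literally the same Verma-expansion.

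To prove the displayed implication, write $(\lambda+\rho,\alpha^\vee)=n+\pi ik\log(q_\alpha)^{-1}$ with $n,k\in{\mathbb Z}$. Since $\rho\in\fh^*_{\mathbb R}$, $n\in{\mathbb Z}$ and $\log(q_\alpha)=\tfrac{(\alpha,\alpha)}2\log q\in{\mathbb R}$, comparing imaginary parts and using $(\mu,\alpha^\vee)=\tfrac{2(\mu,\alpha)}{(\alpha,\alpha)}$ gives $\frac{\log q}{\pi}({\rm Im}(\lambda),\alpha)=k$. But "$2\lambda$ almost real" says exactly that $\frac{\log q}{2\pi}{\rm Im}(2\lambda)=\frac{\log q}{\pi}{\rm Im}(\lambda)$ is small, i.e.\ $\bigl|\frac{\log q}{\pi}({\rm Im}(\lambda),\alpha)\bigr|<1$ for all $\alpha\in\Delta$; hence $k=0$ and $(\lambda+\rho,\alpha^\vee)=n\in{\mathbb Z}$ (and, incidentally, ${\rm Im}(\lambda,\alpha^\vee)=0$).

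The short computation is the only new ingredient; the genuine obstacle is the structural input used in the second paragraph, namely that at a non-root of unity the blockwise structure of $\cO$ over $U_q(\fg)$ — the composition multiplicities, hence the Verma-expansions of simple characters — is controlled by $\Delta^q_{[\lambda]}$ and the dot-action exactly as the classical structure is controlled by the classical integral root system. This can be quoted from the generic-$q$ Kazhdan--Lusztig theory (or the attendant equivalences of highest weight categories), or else set up directly via the $q$-Shapovalov determinant and the $q$-Jantzen filtration together with an induction on the partial order of the linkage class; either way, granted this, the computation above finishes the proof.
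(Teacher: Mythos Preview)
Your argument is correct and follows a genuinely different route from the paper's. You isolate the key point---that under the almost-real hypothesis the $q$-integral root system $\Delta^q_{[\lambda]}$ coincides with the classical integral root system $\Delta_{[\lambda]}$---and then invoke the block-by-block structural equivalence of category $\cO$ for $U_q(\fg)$ at a non-root of unity with the classical category $\cO$ (quantum Kazhdan--Lusztig, or the attendant equivalences of highest weight categories), so that the Verma-expansions of $V_q(\lambda)$ and $V_1(\lambda)$ are literally identical. The paper instead proceeds by a sandwich: it first cites Etingof--Kazhdan for the equality at generic $q$; then it uses the Shapovalov form and the fact (via Joseph's book) that the order of zeros of $\det S_q^\mu$ is constant along $[q,1]$ to obtain $\dim V_q(\lambda)_\mu \le \dim V_1(\lambda)_\mu$; and it finishes by quoting Andersen--Mazorchuk for the reverse inequality. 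Your imaginary-part computation is exactly what forces the $q$-Shapovalov determinant to have the same vanishing factors as the classical one for every $q\in(0,1]$, so you have in effect made transparent the mechanism behind the paper's constancy-of-zeros step. The trade-off is that you lean on a heavier structural package, whereas the paper assembles the result from semicontinuity and two one-line citations; in return, your approach explains \emph{why} the characters agree rather than squeezing each weight-space dimension from both sides.
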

	\begin{proof}
	Thanks to \cite[Corollary 4.8]{EK}, we have this equality for generic $q$.

	In general, let us recall that we have an invariant form $S_q$ called the Shapovalev form on $M_q(\lambda)$, which depends algebraically on $q$ and
	\[V_q(\lambda) = M_q(\lambda)/{\rm Ann}(S_q).\]
	Then, \cite[Theorem 4.1.16]{Joseph} shows the order of zeros of the determinant of $S_q^\mu = S_q|_{M_q(\lambda)_\mu}$ is constant along $[q,1]$, hence we get
	\[\dim V_q(\lambda)_\mu \leq \dim V_1(\lambda)_\mu.\]

	The converse inequality is shown in \cite[Proposition 6.1]{AM}.
	\end{proof}
	For $\lambda \in P_+$, we regard $V_q(\lambda)$ as a family of representation of $U_q(\fg)$ on a single vector space $V(\lambda)$ such that weight spaces are the same and each $U_{q^\alpha}(\frsl_{2,\alpha})$-isotypical components varies continuously with respect to $q$. (This is possible, for example, via the global base.)
	\section{Admissible representations}\label{sec para}
	Let us recall the following definition from \cite{Arano}.
	\begin{df}
	The \emph{Drinfeld double} $D$ of $K_q$ is a $*$-algebra generated by $\cO(K_q)$ and $U_q(\fg)$ with the commutation relation
	\[ab = (a_{(1)} \triangleright b \triangleleft S(a_{(3)})) a_{(2)}\]
	for $a \in \cO(K_q)$, $b \in U_q(\fg)$.
	
	A representation of $D$ is said to be \emph{admissible} if it is of type 1 as a representation of $U_q(\fg)$ and the multiplicity of each irreducible representation of $U_q(\fg)$ is finite.  The multiplicity as a $U_q(\fg)$-representation is called \emph{$K$-type multiplicity}.
	\end{df}
	Let ${\rm Adm}(G_q)$ be the category of admissible representations of $D$.
	
%	For each $\Sigma \subset \Pi$ and $(\lambda,\nu) \in P^{\perp \Sigma} \times (\fh^*)^{\perp \Sigma}$, we define a functor
%	\[({\rm Ind}^\Pi_\Sigma)_{(\lambda,\nu)}: {\rm Adm}(G_q^\Sigma) \to {\rm Adm}(G_q): V \mapsto D_c \otimes_{B_\Sigma} V_{(\lambda,\nu - 2 \rho^{\perp \Sigma})}.\]
	%	\[L^\Sigma(\lambda,\nu) := D_c \otimes_{B_\Sigma} V(\lambda^\Sigma,\nu^\Sigma)_{(\lambda^{\perp \Sigma}, \nu^{\perp \Sigma} - 2 \rho^{\perp \Sigma})}.\]
	For $0<q<1$, let us recall the construction of the (nonunitary) principal series $L_q(\lambda,\nu)$ of $G_q$ which essentially appeared in \cite[Section 4]{Arano}. 
	For each $\lambda \in P$ and $\nu \in \fh^*$, we take the $1$-dimensional representation $\mathbb{C}_{(\lambda,\nu)}$ of $B = U_q(\fh) \cO(K_q) \subset D$ given by
	\[K_\mu \mapsto q^{(\lambda,\mu)}, x \mapsto K_\nu(x)\]
	for $\mu \in P$, $x \in \cO(K_q)$. Then
	\[L_q(\lambda,\nu) := D_c \otimes_B \mathbb{C}_{(\lambda,\nu-2\rho)}.\]
	Hence, as $U_q(\fg)$-module, $L_q(\lambda,\nu) \simeq L_\lambda = \bigoplus_{\mu \in P_+} V(\mu) \otimes V(\mu)^*_\lambda$.

	For $q=1$, let $L_1(\lambda,\nu)$ be the Harish-Chandra module of the (nonunitary) principal series of $G$, that is, the $K$-finite part of
	\[\{f \in C^\infty(G) \mid f(gtan) = a^{\frac{1}{2} \nu - \rho} f(g)\}\]
	for $t \in T$, $a \in A$ and $n \in N$.
	Again $L_1(\lambda,\nu) \simeq L_\lambda$ as $K$-module.
	
	For each $0< q \leq 1$, from the exact same argument as in \cite[Proposition 5.3]{Arano}, we have a nondegenerate invariant sesquilinear form
	\[(\cdot,\cdot)_q^0: L_q(\lambda,\nu) \times L_q(\lambda,-\overline{\nu}) \to {\mathbb C}.\]
	From the construction, this sesquilinear form varies continuously with respect to $q$ (identifying $L_q(\lambda,\nu) = L_\lambda$).
	
	In \cite{VY}, it is shown that for each $w \in W$, there exists a meromorphic family of intertwining operators
	\[T_q^w: L_q(\lambda,\nu) \to L_q(w\lambda,w\nu).\]
	For a simple reflection $w=s_\alpha$, $T_q^\alpha := T_q^{s_\alpha}$ is given by the following.
	
	For each $v \in L_q(\lambda,\nu)$ such that $v \in V(\lambda) \otimes V(s)^*_{m}$ as an $\frsl_{2,\alpha}$-module,
	\[T_q^w v = \prod^s_{k=|m|+1} \frac{(k-z)_{q_\alpha}}{(k+z)_{q_\alpha}} v,\]
	where $z = \frac{1}{2}(\nu,\alpha^\vee)$, $m = \frac{1}{2}(\lambda,\alpha^\vee)$.

	This formula tends to the one in the classical case (\cite[Proposition 3.7]{Duflo})
	\[T_1^w v = \prod^s_{k=|m|+1} \frac{k-z}{k+z} v,\]
	so that we may form $T_q^w$ as a continuous family with respect to $q$, as long as the denominator is nonzero (in particular, if $\nu$ is almost real and $-\frac{1}{2}(\lambda - \nu) - \rho$ is dominant).
	\section{Equivalence of categories}
	Recall that $U_q(\fb^+)$ (resp.\ $U_q(\fb^-)$) is the subalgebra of $U_q(\fg)$ generated by $K_\lambda$ and $E_\alpha$ (resp.\ $K_\lambda$ and $F_\alpha$).
	Define Hopf algebras
	\[\cO(B_q^\pm) := \cO(K_q)/{\rm Ann}(U_q(\fb^{\mp})).\]
	Then the maps
	\begin{align*}
	l_+:\cO(B_q^+) \to U_q(\fb^+): x \mapsto (x \otimes {\rm id})(R), \\
	l_- :\cO(B_q^-) \to U_q(\fb^-): x \mapsto ({\rm id} \otimes x)(R^{-1})
	\end{align*}
	are isomorphisms of Hopf algebras.
	We put
	\[\Psi(x)= (l_- \otimes l_+)\Delta(x) = ({\rm id} \otimes x \otimes {\rm id})(R_{23} R_{12}^{-1}).\]
	Then
	\[I(x) := l_-(x_{(1)})\dS^{-1}(l_+(x_{(2)})) =({\rm id} \otimes x)(R_{12}^{-1} R_{21}^{-1})\]
	is a $U_q(\fg)$-module isomorphism from $\cO(K_q)$ onto $F(U_q(\fg))$.

	We have an algebra embedding
	\[\dD \times \Psi: D \to U_q(\fg) \otimes U_q(\fg).\]
	We describe the image of this map.
%	The following proposition is easy to prove, but plays an essential role in the theory.
	Put
	\[D' := (F(U_q(\fg)) \otimes \dS(F(U_q(\fg))))\dD(U_q(\fg)).\]
	\begin{prop} We have
	\[(\dD \times \Psi)(D) = D'.\]
	\end{prop}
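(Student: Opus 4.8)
The plan is to exploit the fact that the double $D$ is generated as an algebra by its subalgebras $\cO(K_q)$ and $U_q(\fg)$; more precisely, as for any Drinfeld double, the multiplication map $\cO(K_q)\otimes U_q(\fg)\to D$ is a linear isomorphism. Since $\dD\times\Psi$ is an algebra homomorphism restricting to $\Psi$ on $\cO(K_q)$ and to $\dD$ on $U_q(\fg)$, it follows that
\[(\dD\times\Psi)(D)=\Psi(\cO(K_q))\cdot\dD(U_q(\fg)),\]
the subalgebra of $U_q(\fg)\otimes U_q(\fg)$ generated by $\Psi(\cO(K_q))$ and $\dD(U_q(\fg))$. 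The proof then reduces to two points: (a) $D'$ is a subalgebra, namely the one generated by $F(U_q(\fg))\otimes\dS(F(U_q(\fg)))$ and $\dD(U_q(\fg))$; and (b) the generating sets $\{\Psi(\cO(K_q)),\dD(U_q(\fg))\}$ and $\{F(U_q(\fg))\otimes\dS(F(U_q(\fg))),\dD(U_q(\fg))\}$ generate the same subalgebra.

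For (a), the point is that $F(U_q(\fg))$ is by definition stable under the adjoint action, hence so is $\dS(F(U_q(\fg)))$, since the antipode carries $\ad$-finite elements to $\ad$-finite elements, and therefore $F(U_q(\fg))\otimes\dS(F(U_q(\fg)))$ is stable under $\ad\dD(u)$ for every $u\in U_q(\fg)$: one computes $(\ad\dD(u))(a\otimes b)=(\ad u_{(1)})(a)\otimes(\ad u_{(2)})(b)$. Using the identity $\dD(u)\,X=\bigl((\ad\dD(u_{(1)}))X\bigr)\dD(u_{(2)})$ in $U_q(\fg)\otimes U_q(\fg)$, one sees that $(F(U_q(\fg))\otimes\dS(F(U_q(\fg))))\dD(U_q(\fg))$ absorbs left multiplication by $\dD(U_q(\fg))$; since $F(U_q(\fg))\otimes\dS(F(U_q(\fg)))$ and $\dD(U_q(\fg))$ are themselves subalgebras, it absorbs left and right multiplication by each factor, so it is the subalgebra they generate.

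For (b) I would prove two inclusions. The inclusion $\Psi(\cO(K_q))\subseteq D'$ is, since $\Psi$ is an algebra homomorphism and $D'$ is a subalgebra, equivalent to $\Psi(x)\in D'$ for $x$ ranging over a set of algebra generators of $\cO(K_q)$, for instance the matrix coefficients of the fundamental representations and of their duals; this is a finite computation with the $R$-matrix, organized around the relation between $\Psi(x)=l_-(x_{(1)})\otimes l_+(x_{(2)})$ and $I(x)=l_-(x_{(1)})\dS^{-1}(l_+(x_{(2)}))$ and the formulas $\dD(l_\pm(y))=l_\pm(y_{(1)})\otimes l_\pm(y_{(2)})$, which let one rewrite $\Psi(x)$ as an element of $(F(U_q(\fg))\otimes\dS(F(U_q(\fg))))\dD(U_q(\fg))$ after absorbing a Cartan correction into the $\dD(U_q(\fg))$-factor. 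For the reverse inclusion $F(U_q(\fg))\otimes\dS(F(U_q(\fg)))\subseteq(\dD\times\Psi)(D)$ I would argue that, by the same rewriting, $\Psi(\cO(K_q))$ together with $\dD(U_q(\fg))$ already spans all of $I(\cO(K_q))\otimes 1=F(U_q(\fg))\otimes 1$ and of $1\otimes\dS(F(U_q(\fg)))$, and these two generate $F(U_q(\fg))\otimes\dS(F(U_q(\fg)))$; alternatively, since $\dD\times\Psi$ is injective one has $(\dD\times\Psi)(D)\cong D\cong\cO(K_q)\otimes U_q(\fg)$, so after establishing $(\dD\times\Psi)(D)\subseteq D'$ it suffices to match the graded dimensions of $D'$ and of $\cO(K_q)\otimes U_q(\fg)$ over the Cartan. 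Combined with $\dD(U_q(\fg))=(\dD\times\Psi)(U_q(\fg))\subseteq(\dD\times\Psi)(D)$ and part (a), this gives $(\dD\times\Psi)(D)=D'$.

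I expect the main obstacle to be exactly the bookkeeping in (b): verifying that $\Psi(\cO(K_q))$ and $\dD(U_q(\fg))$ carve out $D'$ precisely, neither too much nor too little. The delicate point is lattice-theoretic, since $F(U_q(\fg))=\bigoplus_{\mu\in P_+}(\ad U_q(\fg))(K_{-2\mu})$ carries only the ``even'' part of the Cartan, whereas the full $U_q(\fh)$ sits inside both $\dD(U_q(\fg))$ and the images $l_\pm(\cO(B_q^\pm))=U_q(\fb^\pm)$ occurring in $\Psi$; so one must check that the overlap of the $F(U_q(\fg))\otimes\dS(F(U_q(\fg)))$-factor with the $\dD(U_q(\fg))$-factor is accounted for correctly, which is what makes the graded-dimension count, or equivalently the explicit $R$-matrix identity, nontrivial.
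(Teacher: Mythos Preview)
Your outline is reasonable and would eventually work, but you have correctly diagnosed your own weak point: part (b) is left as ``bookkeeping'' that you do not actually carry out, and both the generator check and the graded-dimension match would require real work. The paper's proof is organized very differently and bypasses this difficulty completely, via two observations you are missing.

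First, the paper notes at the outset that
\[
D' = (F(U_q(\fg))\otimes 1)\,\dD(U_q(\fg)),
\]
i.e.\ the factor $\dS(F(U_q(\fg)))$ in the second leg is redundant once the diagonal is present. This already removes half of the structure you are trying to track. Second, rather than comparing generating sets of two subalgebras, the paper introduces the \emph{linear} isomorphism
\[
\beta:U_q(\fg)\otimes U_q(\fg)\to U_q(\fg)\otimes U_q(\fg),\qquad \beta(x\otimes y)=(x\otimes 1)\dD(y),
\]
with inverse $\beta^{-1}(x\otimes y)=x\,\dS(y_{(1)})\otimes y_{(2)}$, and observes that $\beta^{-1}(D')=F(U_q(\fg))\otimes U_q(\fg)$. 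The whole proof then reduces to the single computation
\[
\beta^{-1}\bigl((\dD\times\Psi)(a\,x)\bigr)=I(a_{(1)})\otimes l_-(a_{(2)})\,x,
\]
using $m(\mathrm{id}\otimes\dS)\Psi=I$. Since $I:\cO(K_q)\to F(U_q(\fg))$ is a linear isomorphism and the map $a\otimes x\mapsto a_{(1)}\otimes l_-(a_{(2)})x$ is a bijection of $\cO(K_q)\otimes U_q(\fg)$, this identifies $\beta^{-1}(\dD\times\Psi)(D)$ with $F(U_q(\fg))\otimes U_q(\fg)$ in one stroke, giving both inclusions simultaneously. In particular the Cartan-lattice issue you flag (``even'' $K_{-2\mu}$ in $F(U_q(\fg))$ versus full $U_q(\fh)$ in the diagonal) never has to be confronted: $\beta$ untwists the diagonal so that the comparison becomes a tensor product.
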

	\begin{proof}
	First, notice that
	\[D' = (F(U_q(\fg)) \otimes 1) \dD(U_q(\fg)).\]
	We also remark that
	\[\beta:U_q(\fg) \otimes U_q(\fg) \to U_q(\fg) \otimes U_q(\fg) : x \otimes y \mapsto (x \otimes 1)\dD(y)\]
	is an isomorphism of vector spaces with inverse map
	\[\beta^{-1}(x \otimes y) = x \dS(y_{(1)}) \otimes y_{(2)}.\]
	Combining these, it suffices to show that
	\[\beta^{-1}(\dD \times \Psi)(D) = F(U_q(\fg)) \otimes U_q(\fg).\]
	
	Now using
	\[m({\rm id} \otimes \dS)\Psi(x) = I(x),\]
	we get
	\begin{align*}
	\beta^{-1}(\dD \times \Psi)(a x) & = \beta^{-1}(l_+(a_{(1)}) \otimes l_-(a_{(2)}))(1 \otimes x) \\
	& = l_+(a_{(1)}) \dS(l_-(a_{(2)})) \otimes l_-(a_{(3)}) x \\
	& = I(a_{(1)}) \otimes l_-(a_{(2)}) x. 
	\end{align*}
	for $a \in \cO(K_q)$ and $x \in U_q(\fg)$. Hence we get the conclusion.
	\end{proof}
	Now we see that the quantum Harish-Chandra module by Joseph and Letzter is nothing but the admissible representation of $D$, and hence we can apply the categorical equivalence between the quantum Harish-Chandra modules and the category ${\mathcal O}$.

	Let $\kappa$ be the involutive antiautomorphism on $U_q(\fg)$ defined by
	\[\kappa(E_\alpha) = K_{-\alpha} F_\alpha, \kappa(F_\alpha) = E_\alpha K_\alpha, \kappa(K_\lambda) = K_\lambda.\]	
	For $\Lambda \in \fh^*$ and $V \in \cO^\Lambda_f$, we define a $U_q(\fg) \otimes U_q(\fg)$-module structure on $(M(\Lambda) \otimes V)^*$ by
	\[(v, xf) := ((\kappa \otimes \dS^{-1})(x) v, f)\]
	for $x \in U_q(\fg) \otimes U_q(\fg)$, $v \in M(\Lambda) \otimes V$ and $f \in (M(\Lambda) \otimes V)^*$.
	Let $\Psi_\Lambda(V)$ be the finite part with respect to the action of $\Delta(U_q(\fg))$. Then $\Psi_\Lambda(V)$ is a $D'$-module in a natural fashion. Via the isomorphism $(\dD \times \Psi)^{-1}$, we regard $\Psi_\Lambda(V)$ as a $D$-module.

	This identification gives a decomposition ${\rm Adm}(G_q) = \bigoplus_{\Lambda \in \frac{1}{2}X/W} {\rm Adm}(G_q)_\Lambda$ with respect to the central character of the right $F(U_q(\fg))$-action.
	The following theorem is a direct translation of \cite[Section 8.4]{Joseph} in our setting.
	\begin{thm}\label{thm JL}
	For every dominant weight $\Lambda \in \fh^*$, we have the following.
	\begin{enumerate}[(i)]
	\item We have a contravariant exact functor
	\[\Psi_\Lambda: \cO^\Lambda_f \to {\rm Adm}(G_q)_\Lambda: V \mapsto F(M(\Lambda) \otimes V)^*.\]
	\item There exists a contravariant functor $T_\Lambda: {\rm Adm}(G_q)_\Lambda \to \cO^\Lambda_f$ such that $\Psi_\Lambda \circ T_\Lambda = {\rm id}$ and
	\[\Hom(V,T_\Lambda(X)) \simeq \Hom(X,\Psi_\Lambda(V)).\]
	\item If $\Lambda$ is regular, the functor $\Psi_\Lambda$ is a categorical equivalence.
	\item For every irreducible $V \in \cO^\Lambda_f$, the admissible module $\Psi_\Lambda(V)$ is either $0$ or irreducible. Moreover this exhausts all irreducibles.
	\end{enumerate}
	\end{thm}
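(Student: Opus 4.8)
The plan is to transport Joseph's treatment of classical Harish--Chandra bimodules (\cite[Section 8.4]{Joseph}) to the present $q$-deformed situation, using the identification $(\dD\times\Psi)(D)=D'$ of the previous proposition to regard an admissible $D$-module as a $\dD(U_q(\fg))$-finite subquotient of a $U_q(\fg)\otimes U_q(\fg)$-module. Every structural ingredient of Joseph's argument --- the freeness of $M(\Lambda)$ as a $U_q(\fn^-)$-module (so that $M(\Lambda)\otimes V$ admits a filtration with Verma subquotients for $V\in\cO^\Lambda_f$), a Kostant-type bound for the $K$-type multiplicities, the block decomposition of the category $\cO$ over $J$ (where weights are read in $\fh^*/\pi i\log(q)^{-1}Q^\vee=\tfrac12 X$) by $F(U_q(\fg))$-central characters, and the projectivity of $M(\Lambda)$ in its block of $\cO$ when $\Lambda$ is dominant, with $M(\Lambda)$ moreover a projective generator when $\Lambda$ is regular --- is available for the quantized enveloping algebra as long as $q$ is not a root of unity, which we assume. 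So the work is to check that each of these survives and to assemble them as in \cite{Joseph}.

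For (i): since $V\mapsto M(\Lambda)\otimes V$ is exact and $(-)^*$ is exact and contravariant, exactness of $\Psi_\Lambda$ reduces to exactness of the passage to the $\dD(U_q(\fg))$-finite part on modules $(M(\Lambda)\otimes V)^*$. I would prove this block by block with respect to the $F(U_q(\fg))$-central character, using the Verma flag on $M(\Lambda)\otimes V$ to show that each graded piece of the finite part is finite-dimensional and that the finite-part functor is exact on this class; the same computation yields $\dim\Hom_{U_q(\fg)}(V(\mu),\Psi_\Lambda(V))<\infty$, so that $\Psi_\Lambda(V)$ genuinely lies in $\mathrm{Adm}(G_q)_\Lambda$. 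Contravariance and compatibility with the $\Lambda$-decompositions are immediate from the construction.

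For (ii): define $T_\Lambda(X)$ to be the object of $\cO^\Lambda_f$ co-represented by $M(\Lambda)$ --- concretely, the $\dD(U_q(\fg))$-finite highest-weight-$\Lambda$ part of the appropriate dual of $X$ --- so that the adjunction $\Hom(V,T_\Lambda(X))\simeq\Hom(X,\Psi_\Lambda(V))$ is tensor--hom adjunction combined with the co-generating role of $M(\Lambda)$, while $\Psi_\Lambda\circ T_\Lambda=\mathrm{id}$ holds because both sides are the $\dD(U_q(\fg))$-finite part of one and the same double dual, using that $X$ is already finite. For (iii): when $\Lambda$ is regular, $M(\Lambda)$ is a projective generator of its block of $\cO$, which makes $T_\Lambda$ exact and forces the counit $T_\Lambda\circ\Psi_\Lambda\to\mathrm{id}$ to be an isomorphism, so that $\Psi_\Lambda$ is a contravariant equivalence. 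For (iv): exactness together with the adjunction of (ii) forces $\Psi_\Lambda$ to take a simple object to a simple object or to $0$; and any irreducible admissible module lies in $\mathrm{Adm}(G_q)_\Lambda$ for a dominant representative $\Lambda$ of its $F(U_q(\fg))$-central character in $\tfrac12 X/W$, so applying $T_\Lambda$ and (ii) exhibits it as $\Psi_\Lambda(V)$ for an irreducible $V\in\cO^\Lambda_f$.

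The hard part will be the exactness of the finite-part functor on the modules $(M(\Lambda)\otimes V)^*$ together with the Kostant-type finiteness: this is the technical core of Joseph's classical argument, and one has to verify that the proof --- which rests on the $U_q(\fn^-)$-module structure of $M(\Lambda)$ and on the linkage principle --- really does carry over to the category $\cO$ over $J$ with weights read modulo $\pi i\log(q)^{-1}Q^\vee$. The projectivity (and generation) of $M(\Lambda)$ for $\Lambda$ dominant, needed for (iii), is the other delicate input and again rests on the $q$-analogue of the block theory of $\cO$. Both are by now standard in the quantum group literature, but it is precisely at these two points that the translation from \cite{Joseph} is more than formal.
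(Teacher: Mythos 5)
Your overall route --- pass through the identification $(\dD\times\Psi)(D)=D'$ of the preceding proposition and then invoke the Harish--Chandra bimodule machinery of \cite[Section 8.4]{Joseph} --- is exactly what the paper does; indeed the paper's entire proof of this theorem is that one citation. However, you mischaracterize the reference: \cite{Joseph} is \emph{Quantum groups and their primitive ideals}, and Section 8.4 treats Harish--Chandra bimodules for $U_q(\fg)\otimes U_q(\fg)$ directly, not for the classical enveloping algebra. Consequently, all of the technical work you flag as needing re-verification in the $q$-setting (exactness of the $\ad$-finite-part functor on $(M(\Lambda)\otimes V)^*$, the Kostant-type multiplicity bound coming from the Verma flag on $M(\Lambda)\otimes V$, projectivity of $M(\Lambda)$ in its block of $\cO$ for dominant $\Lambda$, projective generation in the regular case) is already carried out by Joseph for $U_q(\fg)$, and your closing assertion that ``the translation from \cite{Joseph} is more than formal'' overstates the remaining task. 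The only genuinely new step in the paper is the proposition you cite, which exhibits the $*$-algebra $D$ as $D'=(F(U_q(\fg))\otimes\dS(F(U_q(\fg))))\dD(U_q(\fg))$ inside $U_q(\fg)\otimes U_q(\fg)$ and thereby identifies $\mathrm{Adm}(G_q)$ with Joseph's category of quantum Harish--Chandra bimodules; once that dictionary is in place, statements (i)--(iv) are literal restatements of Joseph's results. Your reconstruction of the structure of Joseph's proof (Verma flags, the adjoint $T_\Lambda$, projective generation) is accurate and would serve well as exposition, but it is not new work that the paper needs to supply.
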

	We use the following lemma, which is proven in \cite{VY}.
	\begin{lem}
	We have an isomorphism
	\[\Psi_\Lambda(M_q(\Lambda')) \simeq L_q(\lambda,\nu),\]
	where $(\lambda,\nu) = (\Lambda - \Lambda', - \Lambda - \Lambda' - 2 \rho)$.
	\end{lem}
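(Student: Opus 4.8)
The plan is to realise both modules inside $U_q(\fg) \otimes U_q(\fg)$ via $\dD \times \Psi$ and then match them. Write $\lambda := \Lambda - \Lambda'$, which lies in $P$ since $M_q(\Lambda') \in \cO^\Lambda$, and $\nu := -\Lambda - \Lambda' - 2\rho$. A preliminary point is that both $\Psi_\Lambda(M_q(\Lambda'))$ and $L_q(\lambda,\nu)$ should lie in the same block $\mathrm{Adm}(G_q)_\Lambda$: for the former this is Theorem~\ref{thm JL}(i); for the latter it is a Harish--Chandra-isomorphism bookkeeping, checking that the central character of the right $F(U_q(\fg))$-action cut out by $(\lambda,\nu) = (\Lambda - \Lambda', -\Lambda - \Lambda' - 2\rho)$ is the one attached to $\Lambda$.

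Next I would check that the two modules agree as $U_q(\fg)$-modules. On one side $L_q(\lambda,\nu) \simeq L_\lambda = \bigoplus_{\mu \in P_+} V(\mu) \otimes V(\mu)^*_\lambda$ is given. On the other side, write $\Psi_\Lambda(M_q(\Lambda')) = F(M_q(\Lambda) \otimes M_q(\Lambda'))^*$; for each $\mu \in P_+$ the multiplicity of $V(\mu)$ in the finite part can be computed by a standard Hom-space manipulation using rigidity of $V(\mu)$ together with the fact that $V(\mu)^* \otimes M_q(\Lambda)$ carries a Verma flag in which $M_q(\Lambda + \xi)$ appears with multiplicity $\dim V(\mu)^*_\xi$. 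Carrying out this computation, paying attention to the twists $\kappa$ and $\dS^{-1}$ in the action on $(M_q(\Lambda) \otimes M_q(\Lambda'))^*$, one recovers exactly the multiplicities of $L_\lambda$, so that $\Psi_\Lambda(M_q(\Lambda')) \simeq L_\lambda$ as $U_q(\fg)$-modules, with the same finite $K$-type multiplicities as $L_q(\lambda,\nu)$.

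Finally I would produce a $D$-module isomorphism. By Frobenius reciprocity for $L_q(\lambda,\nu) = D_c \otimes_B \mathbb{C}_{(\lambda, \nu - 2\rho)}$, a $D$-module map $L_q(\lambda,\nu) \to \Psi_\Lambda(M_q(\Lambda'))$ is the same as a vector of $\Psi_\Lambda(M_q(\Lambda'))$ of $U_q(\fh)$-weight $\lambda$ on which $\cO(K_q)$ acts through the character $K_{\nu - 2\rho}$. One builds the candidate from the highest weight vectors $v_\Lambda \in M_q(\Lambda)$ and $v_{\Lambda'} \in M_q(\Lambda')$: the functional concentrated on the one-dimensional $(\Lambda + \Lambda')$-weight line $\mathbb{C}(v_\Lambda \otimes v_{\Lambda'})$ of the dual has, through the twisted action $(v, xf) = ((\kappa \otimes \dS^{-1})(x) v, f)$, precisely the $U_q(\fh)$-weight $\Lambda - \Lambda' = \lambda$, because $\kappa$ fixes the weight of $v_\Lambda$ and $\dS^{-1}$ negates the weight of $v_{\Lambda'}$. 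In general this functional must first be corrected by lower-weight terms so as to lie in the finite part; unwinding the embedding $D \hookrightarrow U_q(\fg) \otimes U_q(\fg)$ through $\dD \times \Psi$ and the maps $l_{\pm}$, one verifies that $\cO(K_q)$ acts on the resulting vector by $K_{\nu - 2\rho}$, the $2\rho$-shift entering as usual from $\dS^2$ being implemented by $K_{2\rho}$. This gives a nonzero morphism $\phi : L_q(\lambda,\nu) \to \Psi_\Lambda(M_q(\Lambda'))$, which I would then upgrade to an isomorphism by combining the $K$-type equality of the previous step with the fact that both modules are generated over $D$ by the corresponding distinguished vector (equivalently, with the contravariant equivalence of Theorem~\ref{thm JL}). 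Alternatively, one may prove the statement at $q = 1$, where it is the classical identification of the Joseph/Bernstein--Gelfand functor applied to a Verma module with a principal series of the complex group $G$, and then propagate it along the continuous family in $q$ constructed in Section~\ref{sec para}, using Lemma~\ref{lem char} to keep the $K$-type data $q$-independent.

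\textbf{Main obstacle.} The delicate step is the construction of the map: identifying precisely which finite-part vector plays the role of the inducing line, and computing its $\cO(K_q)$-character, forces a careful bookkeeping of $\kappa$, $\dS^{\pm 1}$ and the $R$-matrix entering through $\Psi$. The conceptual statement --- that $\Psi_\Lambda$ sends a Verma module to a principal series --- is classical, so the real content is this matching of conventions and its $q$-deformation, which is why it is convenient to cite \cite{VY}.
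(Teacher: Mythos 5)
The paper does not supply a proof of this lemma at all: it is stated with the sentence ``which is proven in \cite{VY}'' and used as a black box. So there is no paper proof to compare against; your text is an attempt to reconstruct the argument that \cite{VY} is presumed to give.

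As an outline your strategy is the natural one and most of it is sound. The preliminary remarks (that $\lambda\in P$ because $M_q(\Lambda')\in\cO^\Lambda$, that the central character matches the block $\Lambda$) are correct, and the $U_q(\fh)$-weight computation for the functional supported on $v_\Lambda\otimes v_{\Lambda'}$ is right: under $(\kappa\otimes\dS^{-1})\circ\dD$, $K_\mu$ acts as $K_\mu\otimes K_{-\mu}$, giving weight $\Lambda-\Lambda'=\lambda$. The idea of matching $K$-type multiplicities via a Verma flag on $V(\mu)^*\otimes M_q(\Lambda)$ and then producing a nonzero $D$-morphism out of $L_q(\lambda,\nu)$ by Frobenius reciprocity is the expected route.

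The genuine gap is the last step, the upgrade from ``nonzero $D$-morphism with equal finite $K$-type multiplicities on both sides'' to ``isomorphism.'' A nonzero morphism between two modules of the same $K$-type data is neither automatically injective nor surjective. You appeal to ``both modules are generated by the distinguished vector'': this is clear for $L_q(\lambda,\nu)=D_c\otimes_B\mathbb{C}_{(\lambda,\nu-2\rho)}$ but not established for $\Psi_\Lambda(M_q(\Lambda'))$, which is a finite part of a full linear dual and has no obvious single generator; you would need to prove this (or prove injectivity separately, e.g.\ on each $K$-type, or construct an inverse). The parenthetical appeal to ``the contravariant equivalence of Theorem~\ref{thm JL}'' is also delicate: the equivalence holds only for regular $\Lambda$, and more importantly the proof of that equivalence in Joseph's framework already presupposes the identification of $\Psi_\Lambda$ applied to Vermas, so using it here risks circularity. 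These are precisely the kind of issues that push the author to cite \cite{VY} rather than redo the computation; your closing paragraph recognizes this, which is fair, but as written the argument is a sketch, not a proof.
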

	We use the dot-action of the Weyl group on $\fh^*$:
	\[w.\lambda := w(\lambda+\rho)-\rho.\]
	\begin{lem}\label{lem Weyl group}
	For a dominant weight $\Lambda$, $\alpha \in \Delta_+$ and $\Lambda'$ such that $(\Lambda' + \rho,\alpha^\vee) \geq 0$,
	\[\| \Lambda - \Lambda' \| \leq \|\Lambda - s_\alpha. \Lambda' \|,\]
	where $\| \lambda \|$ is the square root of $(\lambda,\lambda)$.
	Moreover the equality holds if and only if $s_\alpha$ stabilizes either $\Lambda$ or $\Lambda'$.
	\end{lem}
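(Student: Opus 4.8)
The plan is to turn the inequality into a single computation with the $W$-invariant form $(\cdot,\cdot)$, exploiting that $s_\alpha$ acts on $\fh^*$ as an orthogonal reflection. First I would pass to $\rho$-shifted coordinates: writing $a=\Lambda+\rho$ and $b=\Lambda'+\rho$, the definition $w.\mu=w(\mu+\rho)-\rho$ gives $\Lambda-\Lambda'=a-b$ and $\Lambda-s_\alpha.\Lambda'=a-s_\alpha(b)$. Since $(s_\alpha(b),s_\alpha(b))=(b,b)$, expanding both squared norms yields
\[
\|\Lambda-s_\alpha.\Lambda'\|^2-\|\Lambda-\Lambda'\|^2=2(a,b)-2(a,s_\alpha(b))=2\bigl(a,\,b-s_\alpha(b)\bigr),
\]
so the whole statement is reduced to understanding the single inner product $(a,\,b-s_\alpha(b))$.

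Next I would substitute $b-s_\alpha(b)=(b,\alpha^\vee)\,\alpha$ and use $2(a,\alpha)=(\alpha,\alpha)(a,\alpha^\vee)$ to rewrite the difference of squared norms as
\[
(\alpha,\alpha)\,(\Lambda+\rho,\alpha^\vee)\,(\Lambda'+\rho,\alpha^\vee).
\]
It then remains to check the sign of each factor: $(\alpha,\alpha)>0$ because the form is positive definite on the real span of the roots; $(\Lambda'+\rho,\alpha^\vee)\geq 0$ is exactly the hypothesis on $\Lambda'$; and $(\Lambda+\rho,\alpha^\vee)\geq 0$ is the content of $\Lambda$ being dominant. Multiplying, the displayed difference is $\geq 0$, which is the asserted inequality.

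For the equality clause, the product above vanishes precisely when one of its last two factors is zero; unwinding the $\rho$-shift, $(\Lambda+\rho,\alpha^\vee)=0$ says $s_\alpha(\Lambda+\rho)=\Lambda+\rho$, i.e.\ $s_\alpha.\Lambda=\Lambda$, and symmetrically $(\Lambda'+\rho,\alpha^\vee)=0$ says $s_\alpha.\Lambda'=\Lambda'$, so equality holds if and only if $s_\alpha$ stabilizes (for the dot action) at least one of $\Lambda$ and $\Lambda'$. The whole argument is a short computation once the reflection is recognized as an isometry, so I anticipate no serious obstacle: the only points requiring care are the bookkeeping of the two $\rho$-shifts and recording that the one non-formal input, the inequality $(\Lambda+\rho,\alpha^\vee)\geq 0$, is exactly what dominance of $\Lambda$ supplies.
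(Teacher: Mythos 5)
Your proof is correct and matches the paper's argument in essence: both reduce the claim to a single identity showing that the difference of squared norms factors as a product of $(\Lambda+\rho,\alpha)$ (or equivalently $(\alpha,\alpha)(\Lambda+\rho,\alpha^\vee)$) and $(\Lambda'+\rho,\alpha^\vee)$, and then read off nonnegativity and the equality case from the dominance of $\Lambda$ and the hypothesis on $\Lambda'$. One small remark: your constant is $(\alpha,\alpha)(\Lambda+\rho,\alpha^\vee)(\Lambda'+\rho,\alpha^\vee)=2(\Lambda'+\rho,\alpha^\vee)(\Lambda+\rho,\alpha)$, whereas the paper writes a factor $4$; your coefficient is the correct one (the paper appears to have a minor typo), though this does not affect the sign analysis.
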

	\begin{proof}
	Immediate from  $\| \Lambda - s_\alpha. \Lambda'\|^2 - \|\Lambda-\Lambda'\|^2 = 4 (\Lambda'+\rho,\alpha^\vee)(\Lambda+\rho,\alpha)$.
	\end{proof}
	\begin{cor}
	We have the following.
	\begin{enumerate}
	\item The module $\Psi_\Lambda(V_q(\Lambda'))$ is isomorphic to $V_q(\lambda,\nu)$ if and only if there is no $\alpha \in \Delta_+$ such that
	\[(\Lambda+\rho,\alpha^\vee) = 0, (\Lambda'+\rho,\alpha^\vee) \in \mathbb{Z}_{\geq 0}.\]
	Otherwise $\Psi_\Lambda(V_q(\Lambda')) = 0$.
	\item The set of modules $\{V_q(\lambda,\nu) \mid \lambda \in P, \nu \in X\}$ exhausts all irreducibles.
	 \item We have $V_q(\lambda,\nu) \simeq V_q(\lambda',\nu')$ if and only if there exists $w \in W$ such that $(\lambda,\nu) = (w \lambda', w \nu')$.
	\item If $\frac{1}{2}(\lambda-\nu) - \rho$ is dominant, then $L_q(\lambda,\nu)$ contains $V_q(\lambda,\nu)$ as a submodule.
	\item If $-\frac{1}{2}(\lambda-\nu) - \rho$ is dominant, then $V_q(\lambda,\nu)$ is a quotient of $L_q(\lambda,\nu)$.
	\end{enumerate}
	\end{cor}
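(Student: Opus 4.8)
The plan is to read off all five assertions from Theorem~\ref{thm JL}, the isomorphism $\Psi_\Lambda(M_q(\Lambda'))\simeq L_q(\lambda,\nu)$ of \cite{VY}, and Lemma~\ref{lem Weyl group}, keeping $\Lambda$ dominant throughout and using the dictionary $\Lambda=\frac{1}{2}(\lambda-\nu)-\rho$, $\Lambda'=-\frac{1}{2}(\lambda+\nu)-\rho$, so that $\lambda=\Lambda-\Lambda'$ and $\nu=-\Lambda-\Lambda'-2\rho$. For~(1), the dichotomy ``zero or irreducible'' is precisely Theorem~\ref{thm JL}(iv); moreover, applying the contravariant exact functor $\Psi_\Lambda$ to the surjection $M_q(\Lambda')\twoheadrightarrow V_q(\Lambda')$ already produces an embedding $\Psi_\Lambda(V_q(\Lambda'))\hookrightarrow\Psi_\Lambda(M_q(\Lambda'))\simeq L_q(\lambda,\nu)$, so the module is always a submodule of the principal series $L_q(\lambda,\nu)$ and only its possible vanishing is at stake. (This embedding, once the left-hand side is known to be nonzero, is also item~(4).)

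To settle the vanishing I would transport the contravariant translation-principle computation of \cite[Section 8.4]{Joseph} to ${\rm Adm}(G_q)$ through Theorem~\ref{thm JL}. Let $W_\Lambda$ be the subgroup generated by the $s_\alpha$ with $(\Lambda+\rho,\alpha^\vee)=0$. Using the partial inverse $T_\Lambda$ (with $\Psi_\Lambda T_\Lambda={\rm id}$ and $\Hom(V,T_\Lambda X)\simeq\Hom(X,\Psi_\Lambda V)$), one has $\Psi_\Lambda(V_q(\Lambda'))\neq0$ if and only if $V_q(\Lambda')$ occurs in the socle of $T_\Lambda(X)$ for some admissible $X$; identifying $T_\Lambda$ of a principal series with a (dual) Verma-type module whose socle is $V_q(\Lambda'')$ for the $W_\Lambda$-antidominant weight $\Lambda''$ linked to $\Lambda'$, one finds that $\Psi_\Lambda(V_q(\Lambda'))\neq0$ exactly when $\Lambda'$ is itself strictly antidominant (or non-integral) along every $\alpha\in\Delta_+$ with $(\Lambda+\rho,\alpha^\vee)=0$, i.e.\ exactly when there is no $\alpha\in\Delta_+$ with $(\Lambda+\rho,\alpha^\vee)=0$ and $(\Lambda'+\rho,\alpha^\vee)\in\mathbb Z_{\geq0}$ --- the stated condition. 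Lemma~\ref{lem Weyl group} is the quantitative input that makes this go through: the leading $K$-type of $\Psi_\Lambda(V_q(\Lambda'))$ across the linkage class is controlled by $\|\Lambda-\Lambda'\|$, and by the lemma this quantity strictly increases under each such $s_\alpha$ unless $s_\alpha$ stabilizes $\Lambda$ or $\Lambda'$, which rules out spurious coincidences of composition factors. I expect this vanishing criterion to be the main obstacle, since it is the only genuinely representation-theoretic step; everything else is bookkeeping on top of Theorem~\ref{thm JL}.

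Items~(2) and~(3) then follow formally. For~(2): by Theorem~\ref{thm JL}(iv) and the decomposition ${\rm Adm}(G_q)=\bigoplus_{\Lambda\in\frac{1}{2}X/W}{\rm Adm}(G_q)_\Lambda$, the irreducibles are exactly the nonzero $\Psi_\Lambda(V_q(\Lambda'))$ with $\Lambda$ dominant and $\Lambda'\in\Lambda+P$, and recording $(\lambda,\nu)=(\Lambda-\Lambda',-\Lambda-\Lambda'-2\rho)$ gives $\lambda\in P$ (forced, since $L_q(\lambda,\nu)$ must be of type~$1$ as a $U_q(\fg)$-module) and $\nu$ well defined in $X=\fh^*/2\pi i\log(q)^{-1}Q^\vee$ --- the asserted list. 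For~(3): a short computation shows that $(\Lambda,\Lambda')\mapsto(w.\Lambda,w.\Lambda')$ induces $(\lambda,\nu)\mapsto(w\lambda,w\nu)$, so the $W$-orbit of $(\lambda,\nu)$ is an isomorphism invariant; realizing each orbit through its dominant $\Lambda$ and the $W_\Lambda$-antidominant $\Lambda'$, and using that $\Psi_\Lambda$ is injective on the non-killed irreducibles (part of the same translation-principle package as Theorem~\ref{thm JL}(iv)), this invariant separates the modules $V_q(\lambda,\nu)$, which is the Weyl-group statement.

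Finally, for~(4) the hypothesis ``$\frac{1}{2}(\lambda-\nu)-\rho$ dominant'' is just the standing hypothesis that $\Lambda$ is dominant, so $\Psi_\Lambda(M_q(\Lambda'))\simeq L_q(\lambda,\nu)$ and the embedding $V_q(\lambda,\nu)=\Psi_\Lambda(V_q(\Lambda'))\hookrightarrow L_q(\lambda,\nu)$ produced above is the claim (and the $T_\Lambda$-adjunction above in fact identifies $V_q(\lambda,\nu)$ with the socle). For~(5), I would first note that ``$-\frac{1}{2}(\lambda-\nu)-\rho$ dominant'' is equivalent to ``$w_0.\Lambda$ dominant'' (with $w_0\in W$ the longest element): writing $\beta_i=-w_0\alpha_i$, a permutation of $\Pi$, one computes $(w_0.\Lambda,\alpha_i^\vee)=-(\Lambda,\beta_i^\vee)-2=(-\Lambda-2\rho,\beta_i^\vee)$. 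Hence $\frac{1}{2}(w_0\lambda-w_0\nu)-\rho=w_0.\Lambda$ is dominant, so~(4) at $(w_0\lambda,w_0\nu)$ together with~(3) shows $V_q(\lambda,\nu)=V_q(w_0\lambda,w_0\nu)$ is the socle of $L_q(w_0\lambda,w_0\nu)$. The same hypothesis makes the long intertwiner $T_q^{w_0}: L_q(\lambda,\nu)\to L_q(w_0\lambda,w_0\nu)$ of \cite{VY} a well-defined nonzero $D$-module map, whose image is a nonzero submodule of $L_q(w_0\lambda,w_0\nu)$ (hence contains the simple socle $V_q(\lambda,\nu)$) and simultaneously a quotient of $L_q(\lambda,\nu)$. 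It remains to show that this image is exactly $V_q(\lambda,\nu)$, which then exhibits $V_q(\lambda,\nu)$ as a quotient of $L_q(\lambda,\nu)$; this is the quantum analogue of the classical fact that the long intertwining operator realizes the Langlands quotient, and I expect it to be the secondary obstacle --- it should follow either by continuity in $q$ from the classical statement or from the explicit product formula for $T_q^{w_0}$ together with~(1).
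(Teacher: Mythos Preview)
Your overall strategy matches the paper's, and your non-vanishing argument for~(1)---controlling the minimal $K$-type via Lemma~\ref{lem Weyl group}---is essentially what the paper does: it takes a Verma resolution of $V_q(\Lambda')$, applies $\Psi_\Lambda$, and observes via Lemma~\ref{lem Weyl group} that every other term $L_q(\Lambda-w.\Lambda',\cdot)$ has strictly larger minimal $K$-type than $|\lambda|$, so $\Psi_\Lambda(V_q(\Lambda'))$ has minimal $K$-type $|\lambda|$ and is nonzero.

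Two places differ. For the \emph{vanishing} half of~(1) you route through $T_\Lambda$ and a socle analysis, which would require identifying $T_\Lambda$ of a principal series---something you do not establish. The paper's argument is much shorter and avoids $T_\Lambda$ entirely: if such an $\alpha$ exists, the injection $M_q(s_\alpha.\Lambda')\hookrightarrow M_q(\Lambda')$ becomes, after the contravariant exact $\Psi_\Lambda$, a surjection $L_q(\lambda,\nu)\twoheadrightarrow L_q(s_\alpha\lambda,s_\alpha\nu)$; the two sides have identical $K$-type multiplicities, so this surjection is an isomorphism, and $\Psi_\Lambda(V_q(\Lambda'))$ lies in its kernel, hence vanishes. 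For~(3) the situation is reversed: you wave at ``the same package,'' while the paper actually \emph{uses} $T_\Lambda$ here. If $H:=\Psi_\Lambda(V(\Lambda'))\simeq\Psi_\Lambda(V(\Lambda''))$ with $\Lambda'\neq\Lambda''$, the adjunction embeds both $V(\Lambda')$ and $V(\Lambda'')$ into $T_\Lambda(H)$, hence $V(\Lambda')\oplus V(\Lambda'')\hookrightarrow T_\Lambda(H)$; applying $\Psi_\Lambda$ gives a surjection $H\twoheadrightarrow H\oplus H$, a contradiction. So $T_\Lambda$ earns its keep in~(3), not in~(1).

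For~(4) and~(5) the paper simply cites \cite{VY} (remarking they also follow from~(1)); your long-intertwiner argument for~(5) is plausible but, as you acknowledge, incomplete, and is more effort than the paper spends here.
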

	\begin{proof}
	For (1), assume there is no $\alpha \in \Delta_+$ as above. Take a resolution of $V_q(\Lambda')$ by Verma modules.
	Then the Verma modules appearing in the resolution are of the form $M_q(w. \Lambda')$ such that
	\[w = s_{\alpha_1} s_{\alpha_2} \ldots s_{\alpha_k},\]
	\[\Lambda' \geq s_{\alpha_k} .\Lambda' \geq s_{\alpha_{k-1}} s_{\alpha_k}.\Lambda' \geq \ldots \geq s_{\alpha_1} s_{\alpha_2} \ldots s_{\alpha_k}. \Lambda' = w. \Lambda'\]
	for $\alpha_i \in \Delta_+$.
	
	Now by assumption, iterated use of Lemma \ref{lem Weyl group} shows that
	\[\|\Lambda- w.\Lambda'\| > \|\Lambda - \Lambda'\|.\]
	We apply $\Psi_\Lambda$ to this resolution to get a resolution of $V_q(\lambda,\nu)$ by principal series representations. Then, the above estimate shows that the minimal $K$-type of principal series representations appearing in the resolution is always strictly larger than $|\lambda|$, where $|\lambda|$ is the unique dominant element in the Weyl group orbit of $\lambda$. Hence the minimal $K$-type of $\Psi_\Lambda(V(\Lambda'))$ is $|\lambda|$. In particular, it is nonzero and hence irreducible, by (iv) of Theorem \ref{thm JL}. 

	Conversely, assume there is such $\alpha$. Then $V_q(\Lambda')$ is a quotient of the cokernel of $M_q(s_\alpha.\Lambda') \to M_q(\Lambda')$, which is injective. Applying $\Psi_\Lambda$, we get that $\Psi_\Lambda(V_q(\Lambda'))$ is a submodule of the kernel of the surjective map $L_q(\lambda,\nu) \to L_q(s_\alpha \lambda, s_\alpha \nu) \simeq L_q(\lambda, s_\alpha \nu)$. Now the comparison of the $K$-type multiplicity gives that this map has to be injective.

	(2) follows from (iv) of Theorem \ref{thm JL} and (1).
	
	For (3), we use the functor $T_\Lambda$ in (ii) of Theorem \ref{thm JL}.
	We only need to show that $H := \Psi_\Lambda(V(\Lambda')) \simeq \Psi_\Lambda(V(\Lambda''))$ implies $\Lambda = \Lambda''$. For this, assume $\Lambda \neq \Lambda'$ and put $M:= T_\Lambda(H)$. Then the adjoint property gives injections $V(\Lambda) \to M$ and $V(\Lambda') \to M$. This gives rise to a map $V(\Lambda) \oplus V(\Lambda') \to M$ which is injective since $V(\Lambda)$ and $V(\Lambda')$ are distinct irreducibles. Applying $\Psi_\Lambda$, this gives a surjection $H \to H \oplus H$, which is a contradiction.

	(4) and (5) are proven in \cite{VY}, which also is a direct consequence of (1).
	\end{proof}
	\begin{cor}
	There exists an invariant hermitian form on $V_q(\lambda,\nu)$ if and only if there exists $w \in W$ such that $w \lambda = \lambda$ and $w \nu = -\overline{\nu}$ modulo $2 \pi i \log(q)^{-1} Q^\vee$.

	In particular, if $\nu$ is almost real, $w \nu = -\overline{\nu}$ in $\fh^*$.
	\end{cor}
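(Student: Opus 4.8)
The plan is to deduce the statement directly from the classification of irreducibles in the preceding corollary together with standard duality properties of the principal series. First I would recall that an invariant hermitian form on a module $X$ is the same as a nonzero $D$-module isomorphism $X \to \overline{X}^*$, where $\overline{X}^*$ denotes the conjugate dual. So the first step is to identify $\overline{V_q(\lambda,\nu)}^*$ as an irreducible admissible module in our list. The conjugate-dual operation sends the principal series $L_q(\lambda,\nu)$ to $L_q(\lambda,-\overline{\nu})$: this is exactly the content of the nondegenerate invariant sesquilinear pairing $(\cdot,\cdot)^0_q \colon L_q(\lambda,\nu) \times L_q(\lambda,-\overline{\nu}) \to \mathbb{C}$ recalled in Section~\ref{sec para}. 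Since $V_q(\lambda,\nu)$ is the unique irreducible submodule (or quotient) of a suitable $L_q(\lambda,\nu)$, taking conjugate duals shows $\overline{V_q(\lambda,\nu)}^* \simeq V_q(\lambda,-\overline{\nu})$ as admissible $D$-modules.

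Next I would invoke part (3) of the preceding corollary: $V_q(\lambda,\nu) \simeq V_q(\lambda',\nu')$ if and only if there exists $w \in W$ with $(\lambda,\nu) = (w\lambda', w\nu')$, where $\nu,\nu'$ are taken in $X = \fh^*/2\pi i \log(q)^{-1} Q^\vee$. Combining this with the identification of the conjugate dual, an invariant hermitian form on $V_q(\lambda,\nu)$ exists if and only if $V_q(\lambda,\nu) \simeq V_q(\lambda,-\overline{\nu})$, which by (3) is equivalent to the existence of $w \in W$ such that $w\lambda = \lambda$ and $w\nu = -\overline{\nu}$ in $X$, i.e. modulo $2\pi i \log(q)^{-1} Q^\vee$. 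This gives the first assertion. For the ``in particular'' clause, suppose $\nu$ is almost real, so $\frac{\log q}{2\pi}\,\mathrm{Im}(\nu)$ is small. Then $\mathrm{Im}(w\nu + \overline{\nu}) = \mathrm{Im}(w\nu) - \mathrm{Im}(\nu)$, so $\frac{\log q}{2\pi}$ times this imaginary part is a difference of two small elements of $\fh^*_{\mathbb R}$ (here using that the set of small weights is $W$-invariant, so $w$ applied to a small weight is small). Since $w\nu = -\overline{\nu}$ holds only modulo $2\pi i \log(q)^{-1} Q^\vee$, the difference lies in $Q^\vee$; by Lemma~\ref{lem case-by-case} it must vanish, and hence $w\nu = -\overline{\nu}$ already in $\fh^*$.

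The main obstacle I anticipate is pinning down the conjugate-dual computation at the level of the irreducible quotient or submodule rather than the full principal series, and keeping careful track of whether equalities of parameters hold in $\fh^*$ or only in $X$. One has to check that the sesquilinear form $(\cdot,\cdot)^0_q$ restricts nondegenerately to the relevant irreducible constituent; this follows because the form, being invariant, must have radical a $D$-submodule, and the minimal $K$-type analysis from the proof of part (1) of the corollary shows that $V_q(\lambda,\nu)$ and $V_q(\lambda,-\overline{\nu})$ occur with multiplicity one in their respective principal series at the minimal $K$-type $|\lambda|$, forcing the pairing between these constituents to be nonzero, hence nondegenerate. The rest is bookkeeping with the $W$-action and the lattice $Q^\vee$.
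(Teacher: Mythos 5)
Your argument is correct and follows essentially the route the paper intends: the paper delegates the first assertion to \cite[Corollary 5.6]{Arano}, which rests on the same self-conjugate-duality principle you spell out (identify $\overline{V_q(\lambda,\nu)}^*$ via the pairing $(\cdot,\cdot)^0_q$ and the minimal $K$-type, then invoke the classification of irreducibles), and the paper's treatment of the ``in particular'' clause is exactly your application of Lemma~\ref{lem case-by-case} to the imaginary parts.
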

	\begin{proof}
	The first part can be proven in the same way as \cite[Corollary 5.6]{Arano}. The second part is a consequence of Lemma \ref{lem case-by-case}.
	\end{proof}
	\begin{cor}
	If $2\Lambda'$ is almost real, the representation $\Psi_\Lambda(V_q(\Lambda'))$ has the same $K$-type multiplicity as $\Psi_\Lambda(V_1(\Lambda'))$.
	In particular, $V_q(\lambda,\nu)$ has the same $K$-type multiplicity with $V_1(\lambda,\nu)$ if $\nu$ is almost real.
	\end{cor}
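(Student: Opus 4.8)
The plan is to reduce each $K$-type multiplicity of $\Psi_\Lambda(V_q(\Lambda'))$ to the character $\ch V_q(\Lambda')$ together with data that is independent of $q$, and then apply Lemma~\ref{lem char}. The one fact that makes $\Psi_\Lambda$ transparent on $K$-types is the lemma identifying $\Psi_\Lambda(M_q(\Lambda'))$ with a principal series: applied to a general highest weight it gives $\Psi_\Lambda(M_q(\eta)) \simeq L_q(\Lambda-\eta,\,-\Lambda-\eta-2\rho)$, which as a $U_q(\fg)$-module is $L_{\Lambda-\eta} = \bigoplus_{\mu\in P_+} V(\mu)\otimes V(\mu)^*_{\Lambda-\eta}$; hence
\[ [\Psi_\Lambda(M_q(\eta)) : V(\mu)] = \dim V(\mu)^*_{\Lambda-\eta}, \]
a number that does not depend on $q\in(0,1)$, and the identical formula holds at $q=1$ for the classical principal series Harish-Chandra module $L_1$.

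I would then pass to Grothendieck groups. In the (finite-length) block of $V_q(\Lambda')$ inside $\cO^\Lambda_f$ the classes of Verma modules form a $\mathbb{Z}$-basis, so $[V_q(\Lambda')] = \sum_\eta b_\eta^{(q)}[M_q(\eta)]$ for unique integers $b_\eta^{(q)}$, indexed by the finitely many $\eta$ in that block; equivalently $\ch V_q(\Lambda') = \sum_\eta b_\eta^{(q)}\ch M_q(\eta)$. Since $U_q(\fn^-)$ has a PBW basis with the same weight (Kostant partition) multiplicities as $U(\fn^-)$, one has $\ch M_q(\eta) = \ch M_1(\eta)$, and distinct $\eta$ give linearly independent such characters (compare highest terms), so the $b_\eta^{(q)}$ are determined by $\ch V_q(\Lambda')$ alone. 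Because $\Psi_\Lambda$ is exact (Theorem~\ref{thm JL}(i)) it is additive on Grothendieck groups, and each $\Psi_\Lambda(M_q(\eta))$ has finite length; combined with the first paragraph this yields
\[ [\Psi_\Lambda(V_q(\Lambda')) : V(\mu)] = \sum_\eta b_\eta^{(q)}\,\dim V(\mu)^*_{\Lambda-\eta}, \]
in which only the $b_\eta^{(q)}$ can carry $q$-dependence.

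Now the hypothesis does its job: if $2\Lambda'$ is almost real, Lemma~\ref{lem char} gives $\ch V_q(\Lambda') = \ch V_1(\Lambda')$, so $b_\eta^{(q)} = b_\eta^{(1)}$ for all $\eta$, and the last display shows $[\Psi_\Lambda(V_q(\Lambda')):V(\mu)] = [\Psi_\Lambda(V_1(\Lambda')):V(\mu)]$ for every $\mu$. For the second assertion, $V_q(\lambda,\nu) = \Psi_\Lambda(V_q(\Lambda'))$ with $\Lambda' = -\frac{1}{2}(\lambda+\nu)-\rho$, hence $2\Lambda' = -\lambda-\nu-2\rho$; as $\lambda$ and $\rho$ are real, ${\rm Im}(2\Lambda') = -{\rm Im}(\nu)$, and the set of small weights of $\fh^*_\mathbb{R}$ is invariant under $x\mapsto -x$ (since $\Delta = -\Delta$), so $2\Lambda'$ is almost real precisely when $\nu$ is.

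The main obstacle I anticipate is bookkeeping rather than anything conceptual: one must confirm that every $M_q(\eta)$ occurring above lies in $\cO^\Lambda_f$ (so that $\Psi_\Lambda$ applies and returns the principal series of the first paragraph), that ``multiplicity of $V(\mu)$'' is a genuine $\mathbb{Z}$-linear functional on $K_0({\rm Adm}(G_q)_\Lambda)$, and that the Verma-versus-simple transition in a block is the usual $q$-independent unitriangular one; the first two points are already implicit in the proof of the preceding corollary.
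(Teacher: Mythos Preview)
Your proof is correct and follows essentially the same strategy as the paper's: equal characters (Lemma~\ref{lem char}) force the same Verma-module coefficients, exactness of $\Psi_\Lambda$ transports this to the admissible side, and the $q$-independence of the $K$-types of principal series finishes the argument. The paper phrases the middle step as ``same factors in the resolution by Verma modules'' rather than working explicitly in the Grothendieck group as you do, but the content is identical; your version is simply more explicit about the bookkeeping you flag in your final paragraph.
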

	\begin{proof}
	Thanks to Lemma \ref{lem char}, the character of $V_q(\Lambda')$ is the same as $V_1(\Lambda')$. Hence we get the same factor in the resolution by the Verma modules.

	Now applying $\Psi_\Lambda$ to the resolution, we get the desired conclusion since the $K$-type multiplicities of $L_q(\lambda,\mu)$ are the same as in the classical case.
	\end{proof}
	The following lemma is a well-known result in linear algebra, so we omit the proof.
	\begin{lem}\label{lem triv}
	Let $(\cdot,\cdot)_q$ be a continuous path of hermitian forms on a finite dimensional vector space $V$ for an interval $q \in [q_0,1]$. Assume that the dimensions of annihilators are constant. Then $(\cdot,\cdot)_q$ is positive definite for all $q$ if and only if it is for some $q$.
	\end{lem}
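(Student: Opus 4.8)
The plan is to reduce Lemma~\ref{lem triv} to the constancy of the signature of a continuous path of Hermitian matrices on a connected parameter interval, with the degeneracy pinned down by the hypothesis on annihilators.

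First I would fix a basis of $V$ and represent $(\cdot,\cdot)_q$ by a continuous path $q\mapsto A_q$ of Hermitian matrices. Listing the eigenvalues of $A_q$ in increasing order with multiplicity, $\lambda_1(q)\le\cdots\le\lambda_n(q)$ with $n=\dim V$, these are continuous functions of $q$ by the standard min--max (Courant--Fischer) characterization of eigenvalues of Hermitian matrices. Write $n_+(q),\ n_0(q),\ n_-(q)$ for the numbers of strictly positive, zero, and strictly negative eigenvalues of $A_q$; then $n_0(q)=\dim\Ann\big((\cdot,\cdot)_q\big)$ and $n_+(q)+n_0(q)+n_-(q)=n$.

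The key step is to show that $n_+,n_-,n_0$ are each constant on $[q_0,1]$. The functions $n_+$ and $n_-$ are lower semicontinuous for any continuous path of Hermitian matrices: if $A_{q^*}$ has $k$ eigenvalues $>0$, they are all $\ge\varepsilon$ for some $\varepsilon>0$, so by continuity of the eigenvalues $A_q$ still has at least $k$ eigenvalues $>0$ for $q$ near $q^*$, and symmetrically for $n_-$. By hypothesis $n_0$ is constant, so $n_++n_-=n-n_0$ is constant; combined with lower semicontinuity of both summands this forces $n_+=(n-n_0)-n_-$ to be simultaneously lower and upper semicontinuous, i.e.\ continuous, and likewise $n_-$. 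Since $[q_0,1]$ is connected, $n_+$ and $n_-$, hence also $n_0$, are constant.

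Finally, $(\cdot,\cdot)_q$ is positive definite precisely when $n_-(q)=n_0(q)=0$; as the triple $(n_+(q),n_0(q),n_-(q))$ does not depend on $q$, this condition holds for all $q$ as soon as it holds for one. I do not expect any genuine obstacle here — this is exactly the piece of elementary linear algebra the paper elects to omit; the only points worth recording are that the constant-annihilator hypothesis is essential and is used precisely in the key step (without it an eigenvalue could cross $0$ as $q$ varies), and that if $n_0$ is a nonzero constant the assertion is vacuously true, since the form is then never positive definite.
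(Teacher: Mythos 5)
Your proof is correct. The paper explicitly omits the argument as ``well-known,'' so there is no in-text proof to compare against, but your write-up is precisely the standard argument one would expect to fill the gap: continuity of eigenvalues via Courant--Fischer, lower semicontinuity of $n_+$ and $n_-$, the constant-annihilator hypothesis forcing $n_+ + n_-$ to be constant and hence each of $n_\pm$ to be continuous, and then constancy on the connected interval $[q_0,1]$. Your remark that the hypothesis on annihilators is the essential ingredient (blocking eigenvalues from crossing zero) is exactly the right observation, as is the note on the vacuous case $n_0 > 0$.
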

	\begin{thm}\label{prop main}
	For almost real $\nu$, the representation $V_q(\lambda,\nu)$ is unitary if and only if $V_1(\lambda,\nu)$ is.
	\end{thm}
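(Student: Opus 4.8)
The plan is to run a deformation argument in $q$, interpolating between the quantum representation $V_q(\lambda,\nu)$ and the classical one $V_1(\lambda,\nu)$, and to invoke Lemma~\ref{lem triv} fiberwise on each $K$-type. First I would fix $(\lambda,\nu)$ with $\nu$ almost real and, using the corollary on invariant hermitian forms together with Lemma~\ref{lem case-by-case}, choose $w\in W$ with $w\lambda=\lambda$ and $w\nu=-\overline\nu$ so that an invariant hermitian form exists for every $q\in(0,1]$ and also at $q=1$. After replacing $(\lambda,\nu)$ by a Weyl translate (which changes nothing by part~(3) of the corollary), I may assume $-\tfrac12(\lambda-\nu)-\rho$ is dominant; then $V_q(\lambda,\nu)$ is a quotient of the principal series $L_q(\lambda,\nu)$ by part~(5). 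On $L_q(\lambda,\nu)=L_\lambda=\bigoplus_{\mu}V(\mu)\otimes V(\mu)^*_\lambda$ we have the continuous (in $q$) nondegenerate sesquilinear pairing $(\cdot,\cdot)^0_q$ between $L_q(\lambda,\nu)$ and $L_q(\lambda,-\overline\nu)$, and composing with the intertwiner $T^w_q:L_q(\lambda,-\overline\nu)\to L_q(\lambda,\nu)$ (which exists as a continuous family in $q$ since $\nu$ is almost real and, after the dominance normalization, the denominators in the product formula stay nonzero) produces the invariant hermitian form $(\cdot,\cdot)_q$ on $L_q(\lambda,\nu)$ whose radical is exactly the maximal proper submodule, so that the induced form on the quotient $V_q(\lambda,\nu)$ is the nondegenerate invariant hermitian form.

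Next I would set up the fiberwise picture. By the corollary comparing $K$-type multiplicities, $V_q(\lambda,\nu)$ and $V_1(\lambda,\nu)$ have the same $K$-type multiplicities for $\nu$ almost real (Lemma~\ref{lem char} feeds this through the Verma resolution), so for each $\mu\in P_+$ the $\mu$-isotypic component is a fixed finite-dimensional space $V(\mu)^{\oplus m_\mu}$ independent of $q$. The invariant hermitian form on $V_q(\lambda,\nu)$ restricts to a hermitian form on each such component, and by Schur it lives on the multiplicity space $\mathbb C^{m_\mu}$; call it $h_\mu(q)$. Continuity of $(\cdot,\cdot)^0_q$ and of $T^w_q$ in $q$ gives that $q\mapsto h_\mu(q)$ is a continuous path of hermitian forms on $\mathbb C^{m_\mu}$ for $q\in[q_0,1]$ (any closed subinterval away from $0$). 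The key structural input is that the dimension of the radical of $h_\mu(q)$ is constant in $q$: this is because the radical of $(\cdot,\cdot)_q$ on all of $L_q(\lambda,\nu)$ equals the maximal submodule, whose $\mu$-multiplicity is $\dim L_\lambda{}_\mu-m_\mu$, independent of $q$ by Lemma~\ref{lem char} again. Hence Lemma~\ref{lem triv} applies to each $h_\mu$: it is positive definite for all $q\in[q_0,1]$ iff it is for one $q$ in that range. Since unitarity of $V_q(\lambda,\nu)$ is precisely positive-definiteness of every $h_\mu(q)$ (with a consistent global normalization of the form, e.g. fixed sign on the minimal $K$-type), we conclude $V_q(\lambda,\nu)$ is unitary iff $V_1(\lambda,\nu)$ is.

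The main obstacle I anticipate is bookkeeping around the normalization and continuity of the form: one must make sure the global scalar on $(\cdot,\cdot)_q$ is chosen continuously (and with the same sign) across $q\in[q_0,1]$, so that ``positive definite for some $q$'' really transports to $q=1$ rather than degenerating into $\pm$ ambiguity; fixing the sign via the value on the minimal $K$-type $|\lambda|$ handles this, since that multiplicity space is one-dimensional and the form there is nonzero for all $q$ by nondegeneracy of $(\cdot,\cdot)^0_q$ composed with $T^w_q$. A secondary point to check carefully is that the chosen $w$ with $w\lambda=\lambda$, $w\nu=-\overline\nu$ can be used simultaneously to define $T^w_q$ for all $q$ in the interval and at $q=1$ with no pole crossing; almost-realness of $\nu$ plus the dominance normalization is exactly what guarantees this, as noted in the discussion of $T^w_q$ in Section~\ref{sec para}. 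Finally, to pass from a fixed interval $[q_0,1]$ to all $q\in(0,1)$ one simply notes that the argument applies with $q_0$ arbitrarily close to $0$ and with any $q\in(0,1)$ playing the role previously played by $1$, comparing instead to $q=1$; transitivity of ``unitary iff unitary'' along overlapping intervals then gives the statement for every $q\in(0,1)$.
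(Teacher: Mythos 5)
Your proposal is correct and takes essentially the same route as the paper: normalize so that $-\tfrac12(\lambda-\nu)-\rho$ is dominant, realize $V_q(\lambda,\nu)$ as a quotient of $L_q(\lambda,\nu)=L_\lambda$, build the invariant hermitian form as $(\cdot,\cdot)^0_q$ composed with the intertwiner $T^w_q$ so that it varies continuously in $q$, and apply Lemma~\ref{lem triv} on each $K$-isotypic component. Your added checks --- that the radical dimensions are constant in $q$ (via the $K$-type multiplicity comparison coming from Lemma~\ref{lem char}) and that the sign can be fixed on the minimal $K$-type --- are exactly the hypotheses of Lemma~\ref{lem triv} that the paper's proof leaves implicit, so you have not changed the argument, only spelled it out.
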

	\begin{proof}
	We take $-1/2(\lambda-\nu)-\rho$ to be dominant, so that $V_q(\lambda,\nu)$ is a quotient of  $L_q(\lambda,\nu) = L_\lambda$. Pick $w \in W$ such that $w \nu = -\overline{\nu}$.
	Then the invariant sesquilinear form $(\cdot,\cdot)_q : L_\lambda \times L_\lambda \to {\mathbb C}$ is given by
	\[(x,y)_q = (x,T_q^w y)_q^0,\]
	where $T_q^w$ is the intertwining operator $L_q(\lambda,\nu) \to L_q(w\lambda,w\nu) = L_q(\lambda,-\overline{\nu})$.
	Hence it varies continuously with respect to $q$.
	This, we can apply Lemma \ref{lem triv} for each $K$-type to get the desired conclusion.
	\end{proof}
	\begin{cor}
	Let $K$ be a connected simply connected compact Lie group and fix $0<q<1$.
	Then, the discrete quantum group $\widehat{K_q}$ has central property ${\rm (T)}$.
	Equivalently, the tensor category ${\rm Rep}(K_q)$ has property ${\rm (T)}$.
	\end{cor}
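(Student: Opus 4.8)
The plan is to deduce central property ${\rm (T)}$ for $\widehat{K_q}$ from property ${\rm (T)}$ for the complexification $G$, transported across the dictionary established by Theorem \ref{prop main} and the parametrization of irreducible admissible representations. Recall that central property ${\rm (T)}$ for $\widehat{K_q}$ is, by the work of De~Commer--Freslon--Yamashita and its relation to quantum doubles, equivalent to an isolation property for the trivial representation inside the unitary dual of the Drinfeld double $G_q$: there should be a neighborhood of the trivial representation (in the Fell topology on the admissible unitary dual) that contains no other unitary representation, and this is exactly the statement that the trivial representation is an isolated point. So the first step is to reduce the corollary to showing that the trivial representation $V_q(0,2\rho)$ (the one with $\lambda = 0$, $\nu$ the distinguished value giving the counit) is isolated in the unitary dual of $G_q$.

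Next I would use the parametrization from the Main Theorem: irreducible admissible representations are parametrized by $(P \times X)/W$, and by Theorem \ref{prop main} together with the corollary on hermitian forms, the unitary ones with $\nu$ almost real correspond precisely to the unitary representations of $G$ with the same parameters. Now comes the key input: $G$ is a higher rank complex semisimple Lie group (rank $\geq 2$ once we exclude $K = SU(2)$, where the statement is handled by Voigt's work on $SU_q(2)$, and more generally the rank one case must be treated separately or excluded), hence $G$ has Kazhdan's property ${\rm (T)}$, so the trivial representation of $G$ is isolated in $\widehat G$. Concretely, there is a neighborhood $U$ of $0$ in $P \times \fh^*$ such that the only unitary irreducible representation of $G$ with parameter in $U$ is the trivial one. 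Since the $\lambda$-parameter lives in the discrete lattice $P$, shrinking $U$ we may assume $\lambda = 0$ throughout $U$, so the only continuous parameter is $\nu$ near $2\rho$; and for such $\nu$ the almost-real condition is automatic.

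Then I would transport this across Theorem \ref{prop main}: for $\nu$ in a small enough neighborhood of $2\rho$ (so that $\nu$ is almost real and $-\tfrac12(\lambda - \nu) - \rho$ is dominant, making $V_q(\lambda,\nu)$ a quotient of the principal series $L_q(\lambda,\nu)$), the representation $V_q(0,\nu)$ is unitary if and only if $V_1(0,\nu)$ is, and by property ${\rm (T)}$ of $G$ the latter forces $\nu = 2\rho$, i.e. the trivial representation. Hence no nontrivial unitary irreducible admissible representation of $G_q$ has parameter close to that of the trivial one. The remaining point is to upgrade this ``no nearby unitary points'' statement to genuine Fell-topology isolation, i.e. to control convergence of arbitrary (not necessarily irreducible) unitary representations to the trivial one; here I would argue that weak containment of the trivial representation in a direct integral, combined with the admissibility decomposition ${\rm Adm}(G_q) = \bigoplus_{\Lambda} {\rm Adm}(G_q)_\Lambda$ and the fact that matrix coefficients near the trivial representation must come from parameters near $2\rho$, reduces the problem to the irreducible case already handled — essentially because the $K$-type multiplicities of the principal series vary continuously and the trivial $K$-type appears with multiplicity one only in the component $\Lambda = \rho$.

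The main obstacle I anticipate is precisely this last upgrade: passing from ``the trivial representation is an isolated point of the \emph{set} of unitary irreducibles'' to ``${\rm Rep}(K_q)$ has property ${\rm (T)}$'' in the categorical sense of Popa--Vaes / Neshveyev--Yamashita, which is a statement about approximately central states or about bimodules weakly containing the trivial one. The careful verification that the analytic isolation transfers to the categorical/central formulation — i.e. that an almost-invariant central vector in a representation of the tube algebra (or equivalently an almost-invariant vector for the quantum double) forces a genuine invariant vector — is where the real work lies, and it will rely on the continuity of the sesquilinear forms $(\cdot,\cdot)_q$ established in the proof of Theorem \ref{prop main} together with the rank $\geq 2$ hypothesis entering through property ${\rm (T)}$ of $G$. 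The rank one cases ($K = SU(2)$) are genuine exceptions and should be acknowledged as such, the double of $SU_q(2)$ being known not to have property ${\rm (T)}$.
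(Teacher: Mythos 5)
Your approach is exactly the paper's, whose entire proof is two sentences: identify the trivial representation as $V_q(0,2\rho)$, observe that almost real weights form an open set, and let Theorem \ref{prop main} plus Kazhdan's property ${\rm (T)}$ for the complexification $G$ do the rest; you have simply unpacked what the paper leaves implicit. Your caveat about rank one is well taken: the corollary as printed omits the higher-rank hypothesis, but both the abstract (``every $q$-deformation of a higher rank compact Lie group'') and the introduction (``rank equal or larger than $2$'') make clear that it is intended, and indeed the argument needs $G$ to have property ${\rm (T)}$, which fails for any $K$ with an $SU(2)$ factor. Your final worry — upgrading isolation of the trivial irreducible among admissible unitary representations to the categorical/central formulation of property ${\rm (T)}$ in the sense of Popa--Vaes and Neshveyev--Yamashita — is unnecessary extra work: that equivalence, together with the fact that every irreducible unitary representation of the double is admissible, is already part of the framework established in \cite{Arano} and recalled at the start of this paper, so the corollary really does reduce to the two-sentence isolation argument you and the paper both give.
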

	\begin{proof}
	The trivial representation is $V_q(0,2\rho)$ in our parametrization. Since the set of almost real weights is open, we get the conclusion.
	\end{proof}
	We conclude this section with the case of $K=SU(n)$.
	Let us remark that for $\chi \in 2 \pi i \log(q)^{-1} P^\vee$, the module $V_q(\lambda,\nu-\chi)$ is unitary if and only if $V_q(\lambda,\nu)$ is. The following corollary is an immediate consequence of Lemma \ref{lem type A} and Theorem \ref{prop main}.
	\begin{cor}\label{cor main}
	Let $K = SU(n)$. For $(\lambda,\nu) \in P \times \fh^*$, the representation $V_q(\lambda,\nu)$ is unitary if and only if there exists $\chi \in 2 \pi i \log(q)^{-1} P^\vee$ such that $V_1(\lambda,\nu-\chi)$ is unitary.
	\end{cor}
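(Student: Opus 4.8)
The plan is to reduce the general parameter $\nu\in\fh^*$ to the almost-real case treated in Theorem~\ref{prop main}, exploiting the type~$A$ phenomenon of Lemma~\ref{lem type A} to absorb ${\rm Im}(\nu)$ into the purely imaginary lattice $2\pi i\log(q)^{-1}P^\vee$, along which unitarity on the quantum side is unchanged by the remark preceding the corollary.

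First I would manufacture the shift. Applying Lemma~\ref{lem type A} to the real weight $\tfrac{\log q}{2\pi}{\rm Im}(\nu)\in\fh^*_{\mathbb R}$ yields $\mu\in P^\vee$ with $\tfrac{\log q}{2\pi}{\rm Im}(\nu)-\mu$ small. Set $\chi_0:=2\pi i\log(q)^{-1}\mu\in 2\pi i\log(q)^{-1}P^\vee$. Since $\chi_0$ is purely imaginary one has ${\rm Re}(\nu-\chi_0)={\rm Re}(\nu)$ and $\tfrac{\log q}{2\pi}{\rm Im}(\nu-\chi_0)=\tfrac{\log q}{2\pi}{\rm Im}(\nu)-\mu$, which is small; hence $\nu-\chi_0$ is almost real.

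I would then chain the two equivalences already at our disposal: by the remark, $V_q(\lambda,\nu)$ is unitary if and only if $V_q(\lambda,\nu-\chi_0)$ is; and by Theorem~\ref{prop main}, applicable because $\nu-\chi_0$ is almost real, $V_q(\lambda,\nu-\chi_0)$ is unitary if and only if $V_1(\lambda,\nu-\chi_0)$ is. Concatenating, $V_q(\lambda,\nu)$ is unitary if and only if $V_1(\lambda,\nu-\chi_0)$ is unitary, so in particular the ``only if'' half of the corollary holds with the explicit witness $\chi=\chi_0$.

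For the ``if'' half one is handed some $\chi\in 2\pi i\log(q)^{-1}P^\vee$ with $V_1(\lambda,\nu-\chi)$ unitary and must produce unitarity of $V_q(\lambda,\nu)$. By the remark again it suffices to show $V_q(\lambda,\nu-\chi)$ is unitary, and if $\nu-\chi$ happens to be almost real this is once more Theorem~\ref{prop main}. The point that is not purely formal --- and that I expect to be the main obstacle --- is the case $\nu-\chi$ not almost real: one then needs that classical unitarity of $V_1(\lambda,\cdot)$ is unchanged upon replacing $\nu-\chi$ by the almost-real representative of its coset modulo $2\pi i\log(q)^{-1}P^\vee$. I would deduce this from the description of the unitary dual of the complex group $G=SL(n,\mathbb C)$: up to the Weyl group, the continuous parameter of a unitary $V_1(\lambda,\cdot)$ consists of a ``complementary'' part governing unitarity together with a free imaginary ``tempered'' part, and a lattice translation of ${\rm Im}$ only moves the latter. (Alternatively one disposes of this by building the harmless hypothesis ``$\nu-\chi$ is almost real'' into the statement, after which the corollary is immediate from the two steps above.)
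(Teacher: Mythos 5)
Your main argument --- choose $\chi_0$ via Lemma~\ref{lem type A} so that $\nu-\chi_0$ is almost real, then chain the remark ($P^\vee$-invariance of quantum unitarity) with Theorem~\ref{prop main} --- is exactly the one-line argument the paper intends; the author declares the corollary ``an immediate consequence'' of precisely these two ingredients. So the ``only if'' part, and in fact the clean iff $V_q(\lambda,\nu)$ unitary $\Leftrightarrow V_1(\lambda,\nu-\chi_0)$ unitary, is handled the same way in both.

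You are also right to flag that, read literally, the ``if'' direction with an \emph{arbitrary} $\chi\in 2\pi i\log(q)^{-1}P^\vee$ is not covered by that chain: Theorem~\ref{prop main} only applies to the almost-real member $\nu-\chi_0$ of the coset, and the lattice-invariance of unitarity that the remark provides holds on the quantum side, not for $V_1$. The paper does not address this; it evidently takes $\chi$ to be the one produced by Lemma~\ref{lem type A}, in which case the corollary is indeed immediate. Your proposed closure of the gap (that for $G=SL(n,\mathbb C)$, unitarity within a coset modulo $2\pi i\log(q)^{-1}P^\vee$ is determined by its almost-real representative, read off from the known structure of the unitary dual) is a sensible way to justify the stronger literal reading, but it is not part of the paper's argument. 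In short: same approach as the paper for the content that the paper actually proves, plus a correct observation of a small imprecision in the ``if'' direction that the paper silently elides.
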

	

\begin{thebibliography}{99}
	\bibitem{AM} Andersen, H.H. ;  Mazorchuk, V. Category $\cO$ for quantum groups. {\it J. Eur. Math. Soc. (JEMS)}  {\bf 17}  (2015),  no. 2, 405--431.
	\bibitem{Arano} Arano, Y. Unitary spherical representations of Drinfeld doubles, to appear in {\it J. Reine Angew. Math.}
	\bibitem{Brannan} Brannan, M. Approximation properties for free orthogonal and free unitary quantum groups. {\it J. Reine Angew. Math.} {\bf 672}  (2012), 223--251.
	\bibitem{DCFY} De Commer, K. ;  Freslon, A. ;  Yamashita, M. CCAP for universal discrete quantum groups.
 {\it Comm. Math. Phys.}  {\bf 331}  (2014),  no. 2, 677--701.
	\bibitem{Duflo} Duflo, M. Représentations irréductibles des groupes semi-simples
 complexes.
(French)  Analyse harmonique sur les groupes de Lie (S$\acute{e}$m., Nancy-Strasbourg,
 1973?75), 
 pp. 26--88. Lecture Notes in Math., Vol. 497, Springer, Berlin,  1975. 
	\bibitem{EK} Etingof, P. ;  Kazhdan, D. Quantization of Lie bialgebras. VI. Quantization of generalized
 Kac-Moody algebras.
 {\it Transform. Groups}  {\bf 13}  (2008),  no. 3-4, 527--539.
	\bibitem{Freslon} Freslon, A. Examples of weakly amenable discrete quantum groups.
 {\it J. Funct. Anal.} {\bf  265}  (2013),  no. 9, 2164--2187.
	\bibitem{GJ} Ghosh, S.K.;  Jones, C. Annular representation theory for rigid $C^*$-tensor
 categories.
{\it J. Funct. Anal.} {\bf 270}  (2016),  no. 4, 1537--1584.
	\bibitem{CJones} Jones, C. Quantum $G_2$ categories have property ${\rm (T)}$.
{\it Internat. J. Math.}  {\bf 27}  (2016),  no. 2, 1650015, 23 pp.
 	\bibitem{Joseph} Joseph, A . Quantum groups and their primitive ideals.
Ergebnisse der Mathematik und ihrer Grenzgebiete (3) [Results in
 Mathematics and Related Areas (3)], 29. Springer-Verlag, Berlin,  1995. x+383 pp. ISBN: 3-540-57057-8
	\bibitem{JL} Joseph, A.; Letzter, G. Verma module annihilators for quantized enveloping algebras. {\it Ann. Sci. \'{E}cole Norm. Sup.} (4) {\bf 28} (1995), no. 4, 493?526.
	\bibitem{NY} Neshveyev, S. ;  Yamashita, M. Drinfeld center and representation theory for monoidal categories, to appear in {\it Comm. Math. Phys.}
	\bibitem{Popa} Popa, S. Some properties of the symmetric enveloping algebra of a subfactor,
 with applications to amenability and property T.
 {\it Doc. Math.} {\bf 4}  (1999), 665--744.
	\bibitem{PV} Popa, S. ;  Vaes, S. Representation theory for subfactors, $\lambda$-lattices and $C^*$-tensor categories.
 {\it Comm. Math. Phys.} {\bf 340}  (2015),  no. 3, 1239--1280.
	\bibitem{Pusz} Pusz, W. Irreducible unitary representations of quantum Lorentz group.
 {\it Comm. Math. Phys.}  {\bf 152}  (1993),  no. 3, 591--626.
	\bibitem{Voigt} Voigt, C. The Baum-Connes conjecture for free orthogonal quantum groups.
 {\it Adv. Math.} {\bf 227}  (2011),  no. 5, 1873--1913.
	\bibitem{VY} Voigt C; Yuncken, R. in preparation.
	\end{thebibliography}
\end{document}